\newtheorem{theorem}{Theorem}[section]
\newtheorem{lemma}[theorem]{Lemma}
\newtheorem{proposition}[theorem]{Proposition}
\theoremstyle{definition}
\newtheorem{definition}[theorem]{Definition}
\theoremstyle{remark}
\newtheorem{remark}[theorem]{Remark}
\theoremstyle{remark}
\numberwithin{equation}{section}
\newcommand{\E}{{\rm E}}
\newcommand{\Var}{{\rm Var}}
\newcommand{\RR}{\mathds{R}}
\newcommand{\qq}{\mbox{\fontfamily{phv}\selectfont Q}}
\newcommand{\pp}{\mbox{\fontfamily{phv}\selectfont P}}
\newcommand{\rr}{\mbox{\fontfamily{phv}\selectfont R}}
\newcommand{\hh}{\mbox{\fontfamily{phv}\selectfont H}}
\newcommand{\uuu}{\mathbb{Y}}
\newcommand{\vvv}{\mathbb{X}}
\newcommand{\gen}{\mbox{\fontfamily{phv}\selectfont A}}
\newcommand{\aaa}{\mathbb{A}}
\newcommand{\ddd}{\mathbb{D}}
\newcommand{\jjj}{\mathbb{J}}
\newcommand{\hhh}{\mathbb{H}}
\newcommand{\ppp}{\mathbb{P}}
\newcommand{\qqq}{\mathbb{Q}}
\newcommand{\mmm}{\mathbb{M}}
\newcommand{\rrr}{\mathbb{R}}
\newcommand{\eee}{\mathbb{E}}
\newcommand{\fff}{\mathbb{F}}
\newcommand{\ggen}{\mathbb{A}}
\newcommand{\ttt}{\mathbb{T}}
\newcommand{\AQH}{\mathcal{Q}}
\newcommand{\calP}{\mathcal{P}}
\newcommand{\calF}{\mathcal{F}}
\newcommand{\lp}{\circ}
\renewcommand{\lp}{ }
\date{Printed \today. (\jobname.tex)}
\title[Infinitesimal generators for  polynomial processes]{Infinitesimal generators for a class of polynomial processes}
\author{
W{\l}odzimierz  Bryc
}
\address{
Department of Mathematics,
University of Cincinnati,
PO Box 210025,
Cincinnati, OH 45221--0025, USA}
\email{Wlodzimierz.Bryc@UC.edu}
\author{Jacek Weso{\l}owski}
\address{ Faculty of Mathematics and Information Science
Warsaw University of Technology pl. Politechniki 1 00-661
Warszawa, Poland}
\email{wesolo@alpha.mini.pw.edu.pl}
\keywords{Infinitesimal generators; quadratic conditional variances;  polynomial processes}
\subjclass[2000]{60J25;46L53}
\colorlet{shadecolor}{gray!30}
\newenvironment{ana}
  {\begin{leftbar}
  \begin{shaded} }
{  \end{shaded}\end{leftbar}}
 \newcommand{\arxiv}[1]{\begin{ana} #1\end{ana}}
\begin{document}
\maketitle

\begin{abstract}
We study  the infinitesimal generators  of evolutions of linear mappings on the space of polynomials, which  correspond to a special class of Markov processes with polynomial regressions called quadratic harnesses.  We relate the infinitesimal generator to the unique solution of a certain commutation equation, and we use the commutation equation to find an explicit formula
for the infinitesimal generator  of free quadratic harnesses.
\end{abstract}

\arxiv{This is an expanded (arxiv) version of the paper.}

\section{Introduction}
In this paper we study properties of evolutions of degree-preserving linear mapping on the space of polynomials. We analyze these mappings in a self-contained algebraic language assuming a number of algebraic properties  which were abstracted out from some special properties of a family of Markov processes called quadratic harnesses. We therefore begin with a review of the relevant theory of Markov processes that motivates our assumptions.

Markov processes that motivate our theory have polynomial regressions with respect to the past $\sigma$-fields. Such Markov processes enjoy many interesting properties and appeared in numerous references
\cite{bakry2003characterization,barrieu2006iterates,Bryc-Wesolowski-03,Bryc-Wesolowski-08,cuchiero2012polynomial,Di-Nardo-2013,Di-Nardo-Oliva-2013,schoutens2000stochastic,schoutens1998levy,sengupta2000time,sengupta2008markov,sengupta2001finitely,sole2008orthogonal,sole2008time,szablowski2012evy,szablowski2013stationary,szablowski2014markov}.
As observed by Cuchiero in \cite{cuchiero2011affine} and Szab\l owski \cite{szablowski2012markov},  transition probabilities $P_{s,t}(x,dy)$ of  such a process on an infinite state space define  the  family of linear
transformations $(\pp_{s,t})_{0\leq s\leq t}$ that map  the linear space $\calP=\calP (\RR)$ of all polynomials  in variable
$x$ into itself. The crucial property that holds in many interesting examples is that  transformations $\pp_{s,t}$ do not increase the degree of a polynomial, that is $\pp_{s,t}:\calP_{\le k}\to\calP_{\le k}$, where $\calP_{\le k}$ denotes a linear space of polynomials of degree $\le k$, $k=0,1,\ldots$. This will be the basis for our algebraic approach.
Cuchiero in \cite{cuchiero2011affine}, see also
\cite{cuchiero2012polynomial} introduced the term "polynomial process"  to  denote such a process in the time-homogeneous case; we will use this term more broadly to denote the family of operators rather than a Markov process.
That is, we  adopt the point of view  that the linear mappings $\pp_{s,t}$ of $\calP$ can be analyzed ``in abstract" without explicit
reference to the underlying Markov process and the transition operators.

We note that operators $\pp_{s,t}$ are well-defined whenever the support of $X_s$ is infinite. So, strictly speaking, operators $\pp_{0,t}$ are well defined only for the so called Markov families that can be started at an infinite number of values of $X_0$. However, it will  turn out that in the cases that we are interested in, even if a Markov process starts with $X_0=0$ we can pass to the limit in $\pp_{s,t}$ as $s\to0$ and in this way define a unique degree-preserving mapping $\pp_{0,t}:\calP\to\calP$.

With the above in mind, we introduce the following definition.
\begin{definition}\label{Def-PPP} Suppose $\calP$ is the linear space of polynomials in one variable $x$. A polynomial process is a family of linear maps $\{\pp_{s,t}:\calP\to\calP,\;0\le s\le t\}$  with the following properties:
\begin{enumerate}
  \item for  $k=0,1,\dots$ and $0\leq s\leq t$, $$\pp_{s,t}(\calP_{\leq k})=\calP_{\leq k},$$
  \item $\pp_{s,t}(1)=1$,
  \item for $0\leq s\leq t\leq u$
   \begin{equation}
     \label{evolution_eqtn-1}
     \pp_{s,t}\circ\pp_{t,u}=\pp_{s,u}\,.
   \end{equation}
\end{enumerate}
\end{definition}

Our next task is to abstract out properties of polynomial processes   that correspond to
  a special class of such Markov processes with linear regressions and quadratic conditional variances under the two-sided conditioning.
 The two-sided linearity of regression, which is sometimes called the ``harness property'',
 takes the following  form:
 \begin{equation}
\label{EQ:LR} \E({X_t}|X_s, X_u)=\frac{u-t}{u-s}
X_s+\frac{t-s}{u-s} X_u,  \mbox{ $0\leq s<t<u$,}
\end{equation}
see e.~g., \cite[formula (2)]{Mansuy-Yor-05}.

We will also assume that the conditional second moment of $X_t$ given $X_s,\,X_u$ is a second degree polynomial in the variables $X_s$ and $X_u$.  The conclusion of \cite[Theorem 2.2]{Bryc-Matysiak-Wesolowski-04} says  that there are five numerical constants
 $\eta,\theta\in\RR$, $\sigma,\tau\geq 0$, and $\gamma\leq 1+2\sqrt{\sigma\tau}$ such that
for all $0\le s<t<u$,
\begin{multline}\label{EQ:q-Var}
\Var(X_t|X_s, X_u) 
= \frac{(u-t)(t-s)}{u(1+\sigma s)+\tau-\gamma s}\left( 1+\eta \frac{u X_s-s X_u}{u-s} +\theta\frac{X_u-X_s}{u-s}\right. \\
 \left.
+ \sigma
\frac{(u X_s-s X_u)^2}{(u-s)^2}+\tau\frac{(X_u-X_s)^2}{(u-s)^2}
-(1-\gamma)\frac{(X_u-X_s)(u X_s-s X_u)}{(u-s)^2} \right).
\end{multline}

Such processes, when standardized, are called quadratic harnesses. Typically they are uniquely determined by five constants $\eta,\,\theta,\, \sigma,\,\tau,\,\gamma$ from \eqref{EQ:q-Var}.
We will also assume that $(X_t)$ is a martingale in a natural filtration. This is a consequence of \eqref{EQ:LR} when $\E(X_t)$ does not depend on $t$, compare \cite[page 417]{Bryc-Wesolowski-03}.

The martingale property is easily expressed in the language of polynomial processes, as it just says that $\pp_{s,t}(x)=x$.
Somewhat more generally, if the polynomials $m_k(x;t)$ in variable $x$, $k\ge 0$,    are martingale polynomials for a Markov process $(X_t)$, that is $\E(m_k(X_t;t)|X_s)=m_k(X_s,s)$, $k\ge 0$, then $\pp_{s,t}(m_k(\cdot;t))(x)=m_k(x;s)$ for $s<t$.

It is harder to deduce the properties of $\pp_{s,t}$ that correspond to   \eqref{EQ:LR} and \eqref{EQ:q-Var}. We will find it useful to  describe the linear mappings of polynomials in the algebraic language, and we will rely on martingale polynomials to complete this task.

\subsection{Algebra $\AQH$ of sequences of polynomials}  We consider the linear space  $\AQH$ of all infinite sequences of polynomials
in variable $x$ with a nonstandard  multiplication    defined as follows. For $\ppp=(p_0,p_1,\dots,p_k,\dots)$  and
 $\qqq=(q_0,q_1,\dots,q_k,\dots)$ in $\AQH$, we define their product
 $\rrr=\ppp \qqq$ as the sequence of polynomials $\rrr=(r_0,r_1,\dots)\in\AQH$ given by
 \begin{equation}\label{def-mult}
  r_k(x)=\sum_{j=0}^{\deg(q_k)} [q_k]_jp_j(x)\,, k=0,1,\dots.
\end{equation}
It is easy to check that algebra $\AQH$ has identity
$$\eee=(1,x,x^2,\dots).$$ In the sequel we will frequently use two special
elements of $\AQH$
    \begin{equation}
  \label{def-fff}
  \fff=(x,x^2,x^3,\dots),
\end{equation}
  and
  \begin{equation}\label{def-ddd}
  \ddd=(0,1,x,x^2,\dots)
  \end{equation}
which is the  left-inverse of $\fff$.

\arxiv{It may be useful to note that $\ppp\fff$ and $\ppp\ddd$ act as shifts:  if $\ppp=(p_0(x),p_1(x),\dots,p_n(x),\dots)$ then
$\ppp\fff=(p_1(x),p_2(x),\dots,p_{n+1}(x),\dots)$ and
 $\ppp\ddd=(0,p_0(x),p_1(x),\dots,p_{n-1}(x),\dots)$.

On the other hand, $\fff\ppp$ is just a multiplication by $x$, i.e., $\fff\ppp=(xp_0(x),xp_1(x),\dots,xp_n(x),\dots)$.
}

Algebra $\AQH$ is isomorphic to the algebra of all  linear mappings
$\calP\to\calP$ under composition: to each $\pp:\calP\to\calP$ we associate a unique sequence of polynomials $\ppp=(p_0,p_1,\dots,p_k,\dots)$ in variable $x$ by setting
$p_k=\pp(x^k)$.  Of course, if $\pp$ does not increase the degree, then $p_k$ is of at most degree $k$.

Under this isomorphism,  the composition $\rr=\pp\circ\qq$ of linear operators $\pp$ and $\qq$ on $\calP$ induces the
multiplication operation  $\rrr=\ppp \lp \qqq$ for  the corresponding sequences of polynomials $\ppp=(p_0,p_1,\dots,p_k,\dots)$
and  $\qqq=(q_0,q_1,\dots,q_k,\dots)$ which was introduced in \eqref{def-mult}.

It is clear that degree-preserving linear mappings of $\calP$   are invertible.
\begin{proposition}\label{P-inv}
If for every $n$ polynomial $p_n$ is of degree $n$ then  $\ppp=(p_0,p_1,\dots)$ has %
multiplicative inverse
$\qqq=(q_0,q_1,\dots)$ and each  polynomial $q_n$ is of degree $n$.
\end{proposition}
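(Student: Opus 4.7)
The plan is to exploit the algebra isomorphism between $\AQH$ and the algebra of linear maps $\calP\to \calP$ under composition, which was established just before the proposition. Under this isomorphism, $\ppp=(p_0,p_1,\ldots)$ corresponds to the unique linear map $\pp\colon \calP\to \calP$ with $\pp(x^k)=p_k$, and the multiplicative identities $\ppp\qqq=\eee$ and $\qqq\ppp=\eee$ translate into $\pp\circ\qq=\mathrm{id}$ and $\qq\circ\pp=\mathrm{id}$. So the task reduces to showing that a linear operator on $\calP$ which strictly preserves degrees admits a two-sided inverse that also strictly preserves degrees.

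First I would check that $\pp$ restricts to a linear bijection of each finite-dimensional subspace $\calP_{\le n}$. By hypothesis $p_0,p_1,\ldots,p_n$ are polynomials of strictly increasing degrees $0,1,\ldots,n$, hence linearly independent; since they all lie in the $(n+1)$-dimensional space $\calP_{\le n}$, they form a basis. Thus $\pp$ sends the standard basis $\{1,x,\ldots,x^n\}$ to a basis of $\calP_{\le n}$, and so restricts to a bijection $\pp\colon \calP_{\le n}\to \calP_{\le n}$ for every $n$. Passing to the union, $\pp$ is a bijection of $\calP=\bigcup_n \calP_{\le n}$, and its two-sided inverse $\qq$ is a well-defined linear map satisfying $\qq(\calP_{\le n})=\calP_{\le n}$ for all $n$.

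Finally, I would upgrade this filtration preservation to the sharper statement $\deg q_n=n$, where $q_n=\qq(x^n)$. Since $\qq$ bijects $\calP_{\le n}$ onto itself and also $\calP_{\le n-1}$ onto itself, it must carry $\calP_{\le n}\setminus \calP_{\le n-1}$ into $\calP_{\le n}\setminus \calP_{\le n-1}$; applying this to $x^n$ gives $\deg q_n=n$. Translating $\pp\circ\qq=\qq\circ\pp=\mathrm{id}$ back through the isomorphism then yields $\ppp\qqq=\qqq\ppp=\eee$ in $\AQH$. I do not expect a real obstacle; the only point needing a moment of care is that the bijection of $\AQH$ with the linear-maps algebra is multiplicative with the conventions used in \eqref{def-mult}, which was verified in the paragraph just before the proposition. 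One could alternatively build $q_k$ constructively by expanding $x^k$ in the basis $\{p_j\}$, but then one would have to verify the left and right inverse properties separately, whereas the operator-theoretic route handles both sides at once.
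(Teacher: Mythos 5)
Your proof is correct, and it reaches the conclusion by a somewhat different route than the paper. The paper's proof is constructive: writing $p_n(x)=\sum_{k\le n}a_{n,k}x^k$, it defines $q_0=1/a_{0,0}$ and $q_n=\frac{1}{a_{n,n}}\bigl(x^n-\sum_{j=0}^{n-1}a_{n,j}q_j(x)\bigr)$, so that $\qqq\lp\ppp=\eee$ and $\deg q_n=n$ are immediate from the recursion; it then passes to the operators $\pp,\qq$ and uses the same finite-dimensional observation you rely on (identity on each $\calP_{\le n}$ plus degree preservation) to conclude that the one-sided inverse is two-sided, i.e.\ $\ppp\lp\qqq=\eee$. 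You instead bypass the recursion entirely: degree preservation makes $\{p_0,\dots,p_n\}$ a basis of $\calP_{\le n}$, so $\pp$ is a bijection of each $\calP_{\le n}$ and hence of $\calP$, which yields the two-sided inverse in one stroke; the price is the extra nested-bijection step showing $q_n\in\calP_{\le n}\setminus\calP_{\le n-1}$, which the paper gets for free from its explicit formula. Both arguments ultimately exploit the same triangular structure of $\pp$ on the filtration $(\calP_{\le n})$; yours is cleaner and handles left and right inverses simultaneously, while the paper's gives the coefficients of the $q_n$ explicitly — precisely the constructive alternative you flag at the end of your proposal.
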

\arxiv{
\begin{proof}
Write $p_n(x)=\sum_{k=0}^n a_{n,k}x^k$. The inverse $\qqq=(q_0,q_1,\dots)$ is given by the family of polynomials $q_n$ of
degree $n$ which solve the following recursion:
$$q_0=\frac{1}{a_{0,0}},  \mbox{ and }  q_n=\frac{1}{a_{n,n}} \left(x^n-\sum_{j=0}^{n-1}
a_{n,j}q_j(x)\right) \mbox{ for $n\geq 1$}.$$
It is then clear that $\qqq\lp\ppp=\eee$.

Since both $\ppp$ and $\qqq$ consist of polynomials of degree $n$ at the $n$-th position of the sequence, the corresponding
linear mappings $\pp,\qq$ on the set of polynomials preserve the degree of a polynomial. Since $\qq\circ\pp$ is the identity on
each finite-dimensional space  $\calP_{\leq n}$,  we see that
 $\pp\circ\qq$ is also an identity, so   $\ppp\lp\qqq=\eee$.

\end{proof}
}

In this paper we  study $\pp_{s,t}$ through the corresponding elements $\ppp_{s,t}$ of the algebra $\AQH$. We therefore rewrite
Definition \ref{Def-PPP}  in the language of algebra $\AQH$.

\begin{definition}[Equivalent form of Definition \ref{Def-PPP}]
  \label{D1}
A polynomial process is a family $\{\ppp_{s,t}\in\AQH: 0\leq s\leq t\}$  %
  with the following properties:
\begin{enumerate}
    \item   for $0\leq s\leq t $ and $n=0,1,\dots$, the $n$-th component of $\ppp_{s,t}$ is a polynomial of degree $n$,
    \item $\ppp_{s,t}(\eee-\fff\lp \ddd)=\eee-\fff\lp \ddd$,

  \item for $0\leq  s\leq t \leq u$ we assume
    \begin{equation}
    \label{evolution_eqtn}
    \ppp_{s,t}\lp \ppp_{t,u}=\ppp_{s,u}.
  \end{equation}
\end{enumerate}
\end{definition}

Since special elements \eqref{def-fff} and \eqref{def-ddd} satisfy $(1,0,0,\dots)=\eee-\fff\lp \ddd$,   property (ii)   is  $\ppp_{s,t}(1,0,0,\dots)=(1,0,0,\dots)$, see Definition \ref{Def-PPP}(ii).

\subsection{Martingale polynomials} \label{Sect:MP}
In this section we re-derive \cite[Theorem
1]{szablowski2012markov}.
 We note that by Proposition \ref{P-inv}   each $\ppp_{s,t}$ is an invertible element of $\AQH$. So  from \eqref{evolution_eqtn}
we see that $\ppp_{t,t}=\eee$ for all $t \geq  0$.   In particular, $\ppp_{0,t}$ is invertible and $\mmm_t=\ppp_{0,t}^{-1}$
consists of polynomials $m_k(x;t)$ in variable $x$ of degree $k$. From  \eqref{evolution_eqtn}, we get
$\ppp_{0,s}\lp\ppp_{s,t}\lp\mmm_t=\ppp_{0,t}\lp\mmm_t=\eee$. Multiplying this on the left by $\mmm_s=\ppp_{0,s}^{-1}$, we see
that
\begin{equation}\label{eq-mart}
  \mmm_s=\ppp_{s,t}\lp\mmm_t,
\end{equation}
i.e.,  $\mmm_t$ is a sequence of martingale polynomials for $\{\ppp_{s,t}\}$ and in addition $\mmm_0=\eee$. Conversely,
\begin{equation}
  \label{m2p}
  \ppp_{s,t}=\mmm_s\lp\mmm_t^{-1}.
\end{equation}

Ref. \cite{szablowski2012markov} points out that in general martingale polynomials are not unique. However, any  sequence $\widetilde \mmm_t=(m_0(x;t),m_1(x;t),\dots)$ of martingale polynomials (with  $m_k(x;t)$
of degree $k$ for  $k=0,1,\dots$), still determines uniquely $\ppp_{s,t}$ via $\ppp_{s,t}=\widetilde\mmm_s\lp\widetilde\mmm_t^{-1}$.

\subsection{Quadratic harnesses}\label{Sect:QH}

Recall that our goal is to abstract out the properties of  the (algebraic) polynomial process $\{\ppp_{s,t}\}$ that  correspond to relations \eqref{EQ:LR} and \eqref{EQ:q-Var}. Since this correspondence is not direct, we first give   the
self-contained algebraic definition, and then point out the motivation for such definition.

\begin{definition}\label{D2}
   We will say that a polynomial process  $\{\ppp_{s,t}:0\leq s\leq t\}$ is a quadratic harness with parameters  $\eta,\theta,\sigma,\tau,\gamma$ if the following three conditions hold:
   \begin{enumerate}
     \item (martingale property) $\ppp_{s,t}(\fff\ddd-\fff^2\ddd^2)=\fff\ddd-\fff^2\ddd^2$,
     \item (harness property) there exists $\vvv\in \AQH$ such that for
         all $t\geq 0$  we have
     \begin{equation}
  \label{harness}
 \ppp_{0,t} \fff\ = (\fff+t\vvv)\lp\ppp_{0,t},
\end{equation}
\item (quadratic harness property) element $\vvv\in \AQH$  introduced in \eqref{harness} satisfies the following quadratic equation
\begin{equation}
  \label{q-harness}
\vvv\lp\fff-\gamma \fff\lp\vvv =\eee+\eta\fff+\theta\vvv+\sigma \fff^2+\tau \vvv^2.
\end{equation}

  \end{enumerate}
\end{definition}

\begin{remark} Using  martingale polynomials $\mmm_t=\ppp_{0,t}^{-1}$ from Section \ref{Sect:MP}, we can re-write
 assumption \eqref{harness} as
 \begin{equation}
  \label{harness-M}
  \fff\lp\mmm_t=\mmm_t\lp(\fff+t\vvv).
\end{equation}
\end{remark}

We now give a brief explanation how the algebraic properties in conditions (i)-(iii) of Definition \ref{D2} are related to the corresponding properties of a (polynomial) Markov process.

\subsubsection{Motivation for the martingale property}
For a Markov process $(X_t)$ with   the natural filtration
      $(\calF_t)$ this takes a more familiar form $E(X_t|\calF_s)=X_s$. Translated back into the language of linear operators on polynomials,
   this is $\pp_{s,t}(x)=x$. In the language of algebra $\AQH$ this is  $\ppp_{s,t}(0,x,0,0,\dots)=(0,x,0,0,\dots)$.   To get the final form of condition (i) we note that $\fff\ddd-\fff^2\ddd^2=(0,x,0,0,\dots)$.
\subsubsection{Motivation for harness property \eqref{harness}}
 For a Markov processes $(X_t)$ with martingale polynomials $m_n(x;t)$,  property \eqref{EQ:LR} implies that
$$E(X_t m_n(X_t;t)|X_s) = E(X_t m_n(X_u;u)|X_s) =  E(E(X_t|X_s,X_u) m_n(X_u;u)|X_s)$$
$$
=\frac{u-t}{u-s} X_s E(m_n(X_u;u)|X_s)+\frac{t-s}{u-s} E(X_um_n(X_u;u)|X_s)$$
$$
=\frac{u-t}{u-s} X_s m_n(X_s;s)+\frac{t-s}{u-s} E(X_um_n(X_u;u)|X_s)$$

The resulting identity
$$E(X_t m_n(X_t;t)|X_s) =\frac{u-t}{u-s} X_s m_n(X_s;s)+\frac{t-s}{u-s} E(X_um_n(X_u;u)|X_s).$$
 in the language of algebra $\AQH$  with  $\mmm_t=(m_0(x;t),m_1(x;t),\dots)$ becomes  \begin{equation}
  \label{harness*}
  \ppp_{s,t}\fff\mmm_t=\frac{u-t}{u-s} \fff\mmm_s+\frac{t-s}{u-s} \ppp_{s,u}\fff\mmm_u.
\end{equation}
Since each  polynomial $x m_n(x;t)$ can be written as the linear combination of $m_0(x;t)$, ..., $m_{n+1}(x;t)$,   one can find $\jjj_t\in\AQH$  such that
$\fff \mmm_t=\mmm_t \jjj_t$. Inserting this into \eqref{harness*}, we see that  martingale property eliminates $\ppp_{s,t}$ and  after left-multiplication by $\mmm_s^{-1}$ we get
$$
(u-s) \jjj_t=(u-t)\jjj_s+(t-s)\jjj_u.
$$
In particular, $\jjj_t$ depends linearly on $t$ and thus
$$\jjj_t=\jjj_0+ t (\jjj_1-\jjj_0).$$
This shows that for any martingale polynomials harness property implies that there exist $\uuu,\vvv\in\AQH$ such that
\begin{equation}
  \label{harness-M+}
  \fff\lp\mmm_t=\mmm_t\lp(\uuu+t\vvv).
\end{equation}
In our special case of  $\mmm_t=\ppp_{0,t}^{-1}$,
we get  $\uuu=\jjj_0=\fff \mmm_0=\fff\eee=\fff$.
This establishes \eqref{harness-M}, which  of course is equivalent to \eqref{harness}.
\subsubsection{Motivation for quadratic harness property \eqref{q-harness}}

 Suppose polynomial process $\{\ppp_{s,t}\}$ in the sense of Definition \ref{D1} arises from a  Markov process with polynomial conditional moments which is a harnesses and
  in addition  has  quadratic conditional variances \eqref{EQ:q-Var}.  Then from the previous discussion, \eqref{harness-M+} holds, and under mild technical assumptions, \cite[Theorem 2.3]{Bryc-Matysiak-Wesolowski-04} shows that   $\vvv,\uuu$   satisfy commutation equation which reduces  to \eqref{q-harness} when $\uuu=\fff$. This motivates condition (iii).

     In fact, it is known that under some additional  assumptions on the growth of moments,
       \eqref{harness} and \eqref{q-harness} imply  \eqref{EQ:LR} and \eqref{EQ:q-Var}, see
\cite[Section 4.1]{szablowski2014markov}; this equivalence is also implicit in the proof of \cite[Theorem
 2.3]{Bryc-Matysiak-Wesolowski-04} and is explicitly used in  \cite[page 1244]{Bryc-Wesolowski-08}.
However, this has no direct bearing on our paper, as  in this paper we simply adopt  the algebraic Definition \ref{D2}.

 \subsection{Infinitesimal generator}

 A polynomial process $\{\ppp_{s,t}\}$ with harness property \eqref{harness} is uniquely determined by $\vvv$.
Indeed, the $n$-th element of the sequence on the left hand side of \eqref{harness} is the $(n+1)$-th polynomial in $\ppp_{0,t}$ while the $n$-th element of the sequence on the right hand side of \eqref{harness} depends only on the first $n$ polynomials in $\ppp_{0,t}$.
In fact, one can check that
\begin{equation}\label{X2P}
\ppp_{0,t}=\sum_{k=0}^\infty(\fff+t\vvv)^k(\eee-\fff\ddd)\ddd^k
\end{equation}

\arxiv{\begin{proof}
   From \eqref{harness} we get
   \begin{equation}
     \label{Yk}
        \ppp_{0,t}\fff(\fff^k\ddd^k -\fff^{k+1}\ddd^{k+1})=(\fff+t\vvv)\ppp_{0,t}(\fff^k\ddd^k -\fff^{k+1}\ddd^{k+1})
   \end{equation}

Denote by $\uuu_k= \ppp_{0,t}(\fff^k\ddd^k -\fff^{k+1}\ddd^{k+1})$.   Multiplying \eqref{Yk} by $\ddd$ from the right, we get
$$
\uuu_{k+1}=(\fff+t\vvv)\uuu_k\ddd.
$$
Since $\uuu_0=\ppp_{0,t}(\eee-\fff\ddd)=\eee-\fff\ddd$ we get $\uuu_k=(\fff+t\vvv)^k(\eee-\fff\ddd)\ddd^k$. The telescoping series gives
$\ppp_{0,t}=\sum_{k=0}^\infty \uuu_k$.

\end{proof}

}
 Since the $k$-th
element of $\fff+t\vvv$  has degree $k+1$, it follows from \eqref{harness-M} that $\mmm_t$ is a rational function
of $t$.
Formula \eqref{m2p} shows that the left infinitesimal generator
\begin{equation}
  \label{LLL}
  \ggen_t=\lim_{h\to 0^+} \frac{1}{h}(\ppp_{t-h,t}-\eee), \; t>0
\end{equation}
is a well defined element of $\AQH$, and that
  \begin{equation}
    \label{A2M} \ggen_t\mmm_t= -\frac{\partial}{\partial t}\mmm_t
  \end{equation}  with differentiation defined componentwise.

Since $\ppp_{s,t}$ is continuous in $t$,   the right
infinitesimal generator exists and is given by the same expression. To see this, we compute $\lim_{h\to 0^+}
\frac{1}{h}(\ppp_{t,t+h}-\eee)$ on   $\mmm_t$. We have

\begin{multline*}
\lim_{h\to 0^+} \frac{1}{h}(\ppp_{t,t+h}\lp\mmm_t-\mmm_t)=
\lim_{h\to 0^+}  \ppp_{t,t+h}\lp\tfrac1h(\mmm_t-\mmm_{t+h}) \\
=\lim_{h\to 0^+}  \ppp_{t,t+h}\lp \lim_{h\to 0^+}   \tfrac1h(\mmm_t-\mmm_{t+h}) = -\frac{\partial}{\partial t}\mmm_t.
\end{multline*}

 The infinitesimal generator $\ggen_t$ determines $\ppp_{s,t}$ uniquely; for an algebraic proof see Proposition \ref{P-uni1}.
Clearly,  $\mmm_t=\eee-\int_0^t\aaa_s\mmm_s ds$.

The infinitesimal generator $\ggen_t$ and its companion operator $\gen_t:\calP\to\calP$
are the main objects of interest in this paper.

\begin{remark}\label{Rem:zeros}

We note that  $\ggen_t=(0,a_1(x;t),a_2(x;t),\dots)$ always starts with a 0, as from $\pp_{s,t}(1)=1$ it follows that
$\gen_t(1)=0$. It is clear that $a_n(x;t)$ is a polynomial in $x$ of degree at most $n$. Martingale property implies that
$a_1(x;t)=0$, as   $\gen_t(x)=0$. The infinitesimal generator considered as an element of $\AQH$ in the latter case starts with
two zeros, $\ggen_t=(0,0,a_2(x;t),a_3(x;t),\dots)$.
\end{remark}

\section{Basic properties of infinitesimal generators for  quadratic harnesses}

  Our first result introduces an auxiliary element $\hhh_t\in\AQH$ that is related to $\ggen_t$ by a   commutation equation.
\begin{theorem}
  \label{T-gen} Suppose that %
  $\{\ppp_{s,t}\in\AQH: 0\leq s\leq t\}$ is a quadratic harness  as in Definition \ref{D2} with generator $\ggen_t$.
For $t>0$, let   \begin{equation}
  \label{H2G} \hhh_t=\aaa_t\lp\fff-\fff\lp\aaa_t
\end{equation}
and denote $\ttt_t=\fff-t\hhh_t$. Then
   \begin{equation}
    \label{H}
 \hhh_t \lp\ttt_t- \gamma \ttt_t\lp \hhh_t
    = \eee+\theta \hhh_t+\eta \ttt_t+\tau \hhh_t^{  2}+\sigma \ttt_t^{  2}.
  \end{equation}

\end{theorem}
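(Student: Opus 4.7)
The right hand side of \eqref{H} is, up to the relabeling $\eta\leftrightarrow\theta$ and $\sigma\leftrightarrow\tau$, exactly what one obtains from the right hand side of \eqref{q-harness} by substituting $(\vvv,\fff)\mapsto(\hhh_t,\ttt_t)$. My plan is therefore to realize $(\hhh_t,\ttt_t)$ as the image of $(\vvv,\fff)$ under an inner algebra automorphism of $\AQH$, and then to transport the quadratic harness relation \eqref{q-harness} along this automorphism.

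The crucial step will be to establish the two conjugation identities
\begin{equation}
\label{plan-conj}
\hhh_t\lp\mmm_t=\mmm_t\lp\vvv,\qquad \ttt_t\lp\mmm_t=\mmm_t\lp\fff.
\end{equation}
For the first, I would differentiate the harness identity \eqref{harness-M}, namely $\fff\lp\mmm_t=\mmm_t\lp(\fff+t\vvv)$, componentwise in $t$; this is legitimate because each component of $\mmm_t$ is a rational function of $t$, as noted in the excerpt just after \eqref{X2P}. The product rule gives
\begin{equation*}
\fff\lp\partial_t\mmm_t=\partial_t\mmm_t\lp(\fff+t\vvv)+\mmm_t\lp\vvv,
\end{equation*}
and then substituting $\partial_t\mmm_t=-\aaa_t\lp\mmm_t$ from \eqref{A2M} and reusing \eqref{harness-M} to collapse $\mmm_t\lp(\fff+t\vvv)=\fff\lp\mmm_t$ reduces this to $(\aaa_t\lp\fff-\fff\lp\aaa_t)\lp\mmm_t=\mmm_t\lp\vvv$, which is the first identity in \eqref{plan-conj}. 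The second identity is then immediate from the definition of $\ttt_t$ combined with the first:
\begin{equation*}
\ttt_t\lp\mmm_t=\fff\lp\mmm_t-t\hhh_t\lp\mmm_t=\mmm_t\lp(\fff+t\vvv)-t\mmm_t\lp\vvv=\mmm_t\lp\fff.
\end{equation*}

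Multiplying each identity in \eqref{plan-conj} on the right by $\ppp_{0,t}=\mmm_t^{-1}$ yields $\hhh_t=\mmm_t\lp\vvv\lp\ppp_{0,t}$ and $\ttt_t=\mmm_t\lp\fff\lp\ppp_{0,t}$, so the map $\phi(\xxx)=\mmm_t\lp\xxx\lp\ppp_{0,t}$ is a unital algebra homomorphism of $\AQH$ (multiplicativity follows from $\ppp_{0,t}\lp\mmm_t=\eee$) that sends $\vvv\mapsto\hhh_t$, $\fff\mapsto\ttt_t$, and $\eee\mapsto\eee$. Applying $\phi$ termwise to the quadratic harness relation \eqref{q-harness} then produces \eqref{H} on the nose. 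The main technical hurdle lies in the differentiation step, which depends on the componentwise rationality of $\mmm_t$ in $t$ and on the formula \eqref{A2M} relating $\partial_t\mmm_t$ to the generator $\aaa_t$; once past that step, the rest is bookkeeping inside the algebra $\AQH$.
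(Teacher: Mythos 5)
Your argument is correct and is essentially the paper's own proof: differentiating \eqref{harness-M}, invoking \eqref{A2M} to get $\hhh_t=\mmm_t\lp\vvv\lp\mmm_t^{-1}$ and $\ttt_t=\mmm_t\lp\fff\lp\mmm_t^{-1}$, and then conjugating \eqref{q-harness} by $\mmm_t$ is exactly what the paper does, merely phrased here as transport along the inner automorphism $\xxx\mapsto\mmm_t\lp\xxx\lp\mmm_t^{-1}$. (One cosmetic remark: no relabeling $\eta\leftrightarrow\theta$, $\sigma\leftrightarrow\tau$ is needed -- the substitution $(\vvv,\fff)\mapsto(\hhh_t,\ttt_t)$ already reproduces the right hand side of \eqref{H} verbatim.)
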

\begin{proof}
Differentiating \eqref{harness-M} and then using \eqref{A2M} we get
$$
-\fff\lp\ggen_t\lp\mmm_t=-\ggen_t\lp\mmm_t(\fff+t\vvv)+\mmm_t\lp\vvv.
$$
Multiplying this from the right by $\mmm_t^{-1}$ and using  \eqref{harness-M} to replace $\mmm_t(\fff+t\vvv)\mmm_t^{-1}$ by
$\fff$ we get
$$
-\fff\lp\ggen_t=-\ggen_t\lp\fff+\mmm_t\lp\vvv\mmm_t^{-1}.$$
Comparing this with \eqref{H2G} we see that
\begin{equation}
  \label{M2H}
  \hhh_t=\mmm_t\lp\vvv\mmm_t^{-1}.
\end{equation}
We now use this  and \eqref{harness-M}  in the definition of $\ttt_t =\fff-t\hhh_t=\mmm_t(\fff+t\vvv)\mmm_t^{-1}-t \hhh_t$. Using \eqref{M2H} we get
$$
\ttt_t=\mmm_t\lp\fff\mmm_t^{-1}.
$$
To derive equation \eqref{H} we now multiply  \eqref{q-harness}  by $\mmm_t$ from the left and by $\mmm_t^{-1}$ from the
right.
\end{proof}

\begin{remark}\label{Rem:zerosH}
As observed in  Remark \ref{Rem:zeros}
the $n$-th element of $\ggen_t$ is a polynomial of degree at most $n$. Thus
writing $\hhh_t=(h_0(x),h_1(x),\dots)$,  from \eqref{H2G} we see that  $h_n$ is of degree at most $n+1$.
Martingale property   implies that $h_0(x)=0$.
\end{remark}

We will need several  uniqueness results that follow from the more detailed  elementwise  analysis of the sequences of
polynomials. We first show that the generator determines uniquely the polynomial process, at least under the harness
\eqref{harness}  condition. 

 \begin{proposition}  \label{P-uni1} %
A polynomial process    $\{\ppp_{s,t}:0\leq s<t\}$ with harness property \eqref{harness}  is determined uniquely  by its  generator $\aaa_t$.
\end{proposition}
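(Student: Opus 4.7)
The plan is to exploit the ODE \eqref{A2M}, which gives $\frac{\partial}{\partial t}\mmm_t = -\aaa_t \lp \mmm_t$ with initial condition $\mmm_0 = \eee$, together with the factorization $\ppp_{s,t} = \mmm_s \lp \mmm_t^{-1}$ from \eqref{m2p}. If I can show that this initial-value problem has a unique solution $\mmm_t \in \AQH$ whose $n$-th component is a polynomial of degree $n$, then $\ppp_{s,t}$ is forced once $\aaa_t$ is fixed. The harness property enters only to ensure that $\mmm_t$ is a rational function of $t$ (as noted following \eqref{X2P}), so that $\aaa_t = -\bigl(\frac{\partial}{\partial t}\mmm_t\bigr)\lp \mmm_t^{-1}$ is componentwise rational, and in particular locally integrable, so standard linear-ODE machinery applies.

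For the uniqueness step I would subtract two putative solutions and work componentwise. If $\delta_t = \mmm_t^{(1)} - \mmm_t^{(2)}$ satisfies $\frac{\partial}{\partial t}\delta_t = -\aaa_t \lp \delta_t$ with $\delta_0 = 0$, then writing the $n$-th component as $\delta^{(n)}_t(x) = \sum_{k=0}^{n} e_{n,k}(t)\, x^k$ and $a_j(x;t) = \sum_{\ell=0}^{j} c_{j,\ell}(t)\, x^\ell$, the multiplication rule \eqref{def-mult} converts the operator equation into the triangular linear ODE system
\begin{equation*}
  e_{n,k}'(t) = -\sum_{j=k}^{n} e_{n,j}(t)\, c_{j,k}(t), \qquad e_{n,k}(0)=0, \qquad k=n,n-1,\dots,0.
\end{equation*}
The triangular structure is the point: at $k=n$ the equation collapses to $e_{n,n}' = -e_{n,n}\, c_{n,n}$, and $e_{n,n}(0)=0$ forces $e_{n,n}\equiv 0$; descending inductively in $k$, each $e_{n,k}$ satisfies a scalar linear homogeneous equation with zero initial value once the higher-index terms have already been shown to vanish, so $e_{n,k}\equiv 0$ throughout. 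Hence $\delta_t=0$.

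The main obstacle I anticipate is justifying the triangular ODE uniqueness carefully: one must know the coefficient functions $c_{j,k}(t)$ are regular enough in $t$ for the scalar linear ODE argument to apply. The observation above — that the harness assumption makes $\mmm_t$, and therefore $\aaa_t$, rational in $t$ componentwise — supplies this regularity. Once uniqueness of $\mmm_t$ is in hand, the identity $\ppp_{s,t} = \mmm_s \lp \mmm_t^{-1}$ yields the proposition.
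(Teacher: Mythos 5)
Your argument is correct, but it proves uniqueness by a different mechanism than the paper. You integrate the linear evolution \eqref{A2M}, $\partial_t\mmm_t=-\ggen_t\lp\mmm_t$ with $\mmm_0=\eee$, and get uniqueness from the componentwise triangular linear ODE system (the triangularity coming from $\deg a_j\le j$, Remark \ref{Rem:zeros}), after which \eqref{m2p} finishes the job; the harness hypothesis is used only to secure $t$-regularity of $\mmm_t$ and hence of the coefficients $c_{j,k}(t)$. The paper instead stays algebraic: combining \eqref{m2p} with \eqref{harness-M} it derives $\ppp_{s,t}\lp\fff=\fff\lp\ppp_{s,t}+(t-s)\ppp_{s,t}\lp\hhh_t$, specializes to $s=0$ to get $\mmm_t\lp\fff=(\fff-t\hhh_t)\lp\mmm_t$, and solves this uniquely by degree counting, with the explicit series $\mmm_t=\sum_{k\ge0}(\fff-t\hhh_t)^k(\eee-\fff\ddd)\ddd^k$ analogous to \eqref{X2P}; since $\hhh_t$ is built from $\ggen_t$ by \eqref{H2G}, uniqueness follows with no limits or ODE theory, and the explicit reconstruction of $\mmm_t$ from $\hhh_t$ is reused later (it is what makes Proposition \ref{P-uni2} imply uniqueness of the whole process and underlies the strategy of Theorem \ref{T:FH}). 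Your route is leaner if one only wants the uniqueness statement, but it is non-constructive and imports analytic ODE machinery into an otherwise algebraic argument; one small point to tighten is the phrase ``rational, in particular locally integrable'' --- rationality alone allows poles, so you should note that the components of $\mmm_t$ (and of $\ppp_{0,t}$, which by \eqref{X2P} are even polynomial in $t$) are finite for every $t\ge0$, hence pole-free and continuous on $[0,\infty)$, so that $\ggen_t=-(\partial_t\mmm_t)\lp\mmm_t^{-1}$ has continuous coefficients and the integrating-factor (or Gronwall) step from the initial condition at $t=0$ is legitimate.
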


\begin{proof}

Fix $s<t$. Combining \eqref{m2p} with \eqref{harness-M} we get
\begin{multline*}
  \ppp_{s,t}\lp\fff-\fff\lp\ppp_{s,t}= \mmm_s\mmm_t^{-1}\fff-\fff\mmm_s\mmm_t^{-1} \\
  =\mmm_s(\uuu+t\vvv)\mmm_t^{-1}-\mmm_s(\uuu+s\vvv)\mmm_t^{-1} = (t-s)\mmm_s\mmm_t^{-1}\left(\mmm_t\vvv\mmm_t^{-1}\right).
\end{multline*}
Therefore, from \eqref{M2H} we get
$$
\ppp_{s,t}\lp \fff=\fff\lp\ppp_{s,t}+(t-s)\ppp_{s,t}\lp\hhh_t.
$$

With $s=0$, this implies
$$
\mmm_t\fff=(\fff-t \hhh_t)\mmm_t.
$$

This equation is similar to equation \eqref{harness} and again uniqueness follows from consideration of the degrees of the polynomials.
The  solution is
$$
\mmm_t=\sum_{k=0}^\infty (\fff-t\hhh_t)^k(\eee-\fff\ddd)\ddd^k,
$$
compare\eqref{X2P}. Thus  $\mmm_t$ is determined uniquely, and  \eqref{m2p} shows that operators $\ppp_{s,t}$ are uniquely determined.
 Since $\hhh_t$ is expressed  in terms of $\ggen_t$ by \eqref{H2G}, this ends the proof.

\end{proof}

Next, we show that  $\hhh_t=(h_0,h_1,\dots)$   is uniquely determined by the commutation equation \eqref{H} with the ``initial condition'' $h_0=0$. From the proof of Proposition \ref{P-uni1} it therefore follows that
the entire  quadratic harness $\{\ppp_{s,t}\}$ as well as its generator are also uniquely determined by  \eqref{H}.

\begin{proposition}  \label{P-uni2} 
If $\sigma,\tau\geq0$,  
and $\sigma\tau\ne 1$ then equation \eqref{H}
has a unique solution among $\hhh_t\in\AQH$ such that $h_0(x)=0$.
\end{proposition}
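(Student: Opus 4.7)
The plan is to argue by induction on the component index: I show that the conditions $h_0 = 0$ and equation \eqref{H} together determine $h_0, h_1,\dots,h_n$ uniquely for every $n\ge 0$. The base case $n=0$ is the hypothesis. For the inductive step I view the $n$-th component of \eqref{H} as an equation for the polynomial $h_{n+1}$.

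Using the multiplication rule \eqref{def-mult} together with the degree bounds $\deg h_j\le j+1$ from Remark \ref{Rem:zerosH} and $\deg t_j\le j+1$ (which holds since $\ttt_t=\fff-t\hhh_t$), each of the convolution sums defining $(\hhh_t\ttt_t)_n$, $(\ttt_t\hhh_t)_n$, $(\hhh_t^2)_n$, and $(\ttt_t^2)_n$ runs over $j\le n+1$, so the $n$-th component of \eqref{H} involves only $h_0,\dots,h_{n+1}$. Isolating the $j=n+1$ contributions that carry the unknown polynomial $h_{n+1}(x)$, the equation takes the form
$$C_n(t)\,h_{n+1}(x)=R_n(x),$$
where $R_n(x)$ is a polynomial of degree at most $n+2$ determined by $h_0,\dots,h_n$, and the scalar coefficient is
$$C_n(t)=(1+\sigma t)-h_{n,n+1}\bigl[\tau+(1-\gamma)t+\sigma t^2\bigr],$$
with $h_{n,n+1}$ the (already known) leading coefficient of $h_n$. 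Provided $C_n(t)\ne 0$, this uniquely determines $h_{n+1}=R_n/C_n(t)$ as a polynomial of degree at most $n+2$, and the induction closes.

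The main obstacle is verifying $C_n(t)\ne 0$, and this is where the hypothesis $\sigma\tau\ne 1$ enters. Comparing $x^{n+2}$-coefficients on both sides of the $n$-th component yields the M\"obius-type recursion $a_{n+1}=\bigl[\sigma+(\gamma-\sigma t)a_n\bigr]/C_n(t)$ for $a_n:=h_{n,n+1}$. Passing to $r_n:=a_n/(1-t a_n)$ eliminates the $t$-dependence and reduces this to the $t$-free iteration
$$r_{n+1}=\frac{\sigma+\gamma r_n}{1-\tau r_n},\qquad r_0=0,$$
while a direct computation identifies $C_n(t)=0$ with $r_n=(1+\sigma t)/(\tau-\gamma t)$. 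The hard step is to show that the forward orbit of $0$ under this M\"obius map never meets the pole $(1+\sigma t)/(\tau-\gamma t)$; I expect this to follow from analyzing the fixed-point structure of the recursion (the roots of $\tau r^2-(1-\gamma)r+\sigma=0$, with discriminant $(1-\gamma)^2-4\sigma\tau$) together with the determinant $\gamma+\sigma\tau$ of the associated matrix, which precisely isolates the degenerate case $\sigma\tau=1$ from the generic one.
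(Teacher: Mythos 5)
Your setup is correct and, up to the change of variables, identical to the paper's: the componentwise induction, the coefficient $C_n(t)=1+\sigma t-[h_n]_{n+1}\bigl(\sigma t^2+(1-\gamma)t+\tau\bigr)$, and the leading-coefficient recursion are exactly \eqref{h-recurrence-v0}, and your substitution $r_n=a_n/(1-ta_n)$ does turn that recursion into the $t$-free iteration $r_{n+1}=(\sigma+\gamma r_n)/(1-\tau r_n)$ with $C_n(t)=0$ equivalent to $r_n(\tau-\gamma t)=1+\sigma t$ (I checked the algebra). But the proof stops exactly where the real work is: the nonvanishing of $C_n(t)$ is only ``expected'' to follow from a fixed-point analysis, and is never established. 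Worse, the statement you propose to prove --- that the orbit of $0$ never meets $(1+\sigma t)/(\tau-\gamma t)$ --- is false under the stated hypotheses. Take $\gamma=-2$, $\sigma=1$, $\tau=0$, $t=1$ (so $\sigma,\tau\ge0$ and $\sigma\tau=0\ne1$): then $h_1(x)=(1+\eta x+x^2)/2$, so $a_1=\tfrac12$, $r_1=\sigma=1=(1+\sigma t)/(\tau-\gamma t)$, and $C_1(t)=2-\tfrac12\cdot4=0$. Hence no analysis of the fixed points can deliver orbit avoidance in this generality.

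What rescues uniqueness when $C_n(t)=0$ is a different mechanism, and it is the step your plan is missing: if $\gamma+\sigma\tau\ne0$ and $C_n(t)=0$, then the $x^{n+2}$-coefficient of the right-hand side of \eqref{h-recurrence-v0} equals $\sigma+(\gamma-\sigma t)[h_n]_{n+1}=(\gamma+\sigma\tau)/(\sigma t^2+(1-\gamma)t+\tau)\ne0$, while the left-hand side is identically zero; so the $n$-th component equation is inconsistent, no solution with those initial components exists, and uniqueness holds regardless of the orbit. This degree/contradiction argument is the paper's Case $\gamma+\sigma\tau\ne0$, and it does not use $\sigma\tau\ne1$ at all. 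The hypothesis $\sigma\tau\ne1$ is needed only in the degenerate case $\gamma+\sigma\tau=0$, where your M\"obius map is constant ($r_n\equiv\sigma$ for $n\ge1$) and the excluded value is $1/\tau$, so avoidance is exactly $\sigma\tau\ne1$; this recovers the paper's second case and your framework handles it. Note also that your parenthetical identification is off: the vanishing determinant of the associated matrix is $\gamma=-\sigma\tau$, not $\sigma\tau=1$; the condition $\sigma\tau=1$ is precisely when the constant orbit $\sigma$ hits the pole $1/\tau$ inside that degenerate case. So the proposal needs the case split and the degree argument to close the gap; the M\"obius reformulation alone cannot.
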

\begin{proof}
Eliminating  $\ttt_t=\fff-t\hhh_t$ from \eqref{H} we can rewrite it into the following equivalent form.
\begin{equation}\label{GlownyWzorek}
\hhh_t\lp \fff-\gamma\fff\lp \hhh_t=\eee+\theta \hhh_t+\eta(\fff-t\hhh_t)+\tau \hhh_t^{\lp 2}+\sigma (\fff-t\hhh_t)^{\lp2}+(1-\gamma)t\hhh_t^{\lp 2}.
\end{equation}

Write $\hhh_t=\hhh=(h_n(x))_{n=0,1,\dots}$ for a fixed $t>0$, with $h_0=0$.
We will simultaneously prove that for $n\geq1$, polynomial $h_n(x)$ is uniquely determined and that its  degree is at most $n+1$.
The proof is by induction. With $h_0=0$ it is clear that the thesis holds true for $n=0$. We therefore assume that $h_0,h_1,\dots,h_n$ are given polynomials of degrees at most $1,2,\dots,n+1$, respectively.

Looking at the $n$-th element   of   \eqref{GlownyWzorek} for $n\geq 0$,
we get the following equation for $h_{n+1}(x)$:
 \begin{multline}\label{h-recurrence-v0}
\left(1+\sigma t- [h_n]_{n+1}(\sigma t^2+(1-\gamma)t+\tau) \right)h_{n+1}(x)
\\=x^n+\eta x^{n+1}+\sigma x^{n+2}+(\theta-t\eta)h_n(x)\\+(\gamma-\sigma t)xh_n(x)+(\sigma t^2+(1-\gamma)t+\tau) \sum_{j=0}^{n} [h_n]_j h_j(x)\,.
\end{multline}
The degree of the polynomial on the right hand side is at most $n+2$, as the highest degree term on the right hand side is $\left(\sigma+(\gamma-\sigma t)[h_n]_{n+1}\right)x^{n+2}$.
In order to complete the proof, we only need to verify that for all $n\geq 0$, the coefficient $1+\sigma t- [h_n]_{n+1}(\sigma t^2+(1-\gamma)t+\tau) $ on the left hand side of \eqref{h-recurrence-v0} does not vanish.


 We   need to consider  separately two cases.
\begin{description}
\item [Case $\gamma +\sigma\tau\ne 0$] We proceed by contradiction.
Suppose  that for some $n\geq 0$ the coefficient at $h_{n+1}(x)$ on the left hand side of \eqref{h-recurrence-v0} is  $0$. Since $1+\sigma t>0$, this implies that $\sigma t^2+(1-\gamma)t+\tau$ cannot be zero. So  we get
$$
[h_n]_{n+1}=\frac{1+\sigma t}{\sigma t^2+(1-\gamma)t+\tau}.
$$
We now use this value to compute  the coefficient at $x^{n+2}$ on the right hand side of  \eqref{h-recurrence-v0}. We get
$$\sigma+(\gamma-\sigma t)[h_n]_{n+1}=\frac{\gamma+\sigma\tau}{\sigma t^2+(1-\gamma)t+\tau}\ne 0.$$
Since the left hand side of  \eqref{h-recurrence-v0} is $0$, and the degree of the right hand side of  \eqref{h-recurrence-v0} is $n+2$, this is a contradiction.

This shows that the coefficient of $h_{n+1}(x)$ on the left hand side of \eqref{h-recurrence-v0} is  non-zero for all
 $n\geq0$. So  each polynomial $h_{n+1}(x)$ is determined uniquely and has degree at most $n+2$ for all $n\geq 0$.

\item [Case $\gamma+\sigma\tau=0$]
In this case  $ \sigma t^2+(1-\gamma)t+\tau  = (1+\sigma t)(\tau+t) $.    Comparing the  coefficients at $x^{n+2}$ on both sides of \eqref{h-recurrence-v0}  we get
\begin{equation}
  \label{ComonFact}
  (1+\sigma t)(1-[h_n]_{n+1}(\tau+t)) [h_{n+1}]_{n+2}= \sigma (1-[h_n]_{n+1}(\tau+t)).
\end{equation}

Since $h_0(x)=0$, this gives  $[h_1]_{2}= \sigma/(1+\sigma t)$. So for $n=1$  we get
 $1-[h_n]_{n+1}(\tau+t)=(1-\sigma\tau)/(1+\sigma t)\ne 0$.  Dividing both sides of \eqref{ComonFact} by  this expression we
 get recursively   $[h_n]_{n+1}=\sigma/(1+\sigma t)$ for all $n\geq 1$. Therefore, for $n\geq 1$, the left hand side of
  \eqref{h-recurrence-v0} simplifies  to $(1-\sigma \tau) h_{n+1}(x)$. Using again $1-\sigma\tau\ne0$, this shows that polynomial
  $h_{n+1}$ is determined uniquely and its degree is at most $n+2$. Of course, \eqref{h-recurrence-v0} determines
   $h_1(x)$   uniquely, too,  as $h_0(x)=0$.
\end{description}

\end{proof}

Our main result is the identification of the infinitesimal generator for quadratic harnesses with $\gamma=-\sigma\tau$. Such processes were called ``free quadratic harnesses" in
 \cite[Section 4.1]{Bryc-Matysiak-Wesolowski-04}.   Markov processes with free quadratic  harness property  were constructed in  \cite{Bryc-Matysiak-Wesolowski-05} but the construction  required more restrictions on the parameters than what we impose here.

In this section we represent the infinitesimal generators using the auxiliary power series $\varphi_t(\ddd)$ with $\ddd$ defined by
\eqref{def-ddd}. In Section \ref{Sect:IR} we will use the results of this section to derive the  integral representation
\eqref{gen-free} for the  operator $\gen_t$ under
 a more restricted range of parameters $\eta,\theta,\sigma,\tau$.

For a formal power series  $\varphi(z)=\sum_{k=0}^\infty c_kz^k$ %
we shall
write $\varphi(\ddd)$ for the series  $\sum_kc_k \ddd^k$.
   We note that since  the sequence $\ddd^k$ begins with $k$ zeros, $\sum_kc_k \ddd^k$ is a sequence of finite sums:
$$\varphi(\ddd)=(c_0,c_0x+c_1,c_0x^2+c_1x+c_2,\dots,\sum_{j=0}^n c_j x^{n-j},\dots).$$
So  $\varphi(\ddd) $ is a  well defined element of $\AQH$.

  We will also need $\ddd_1=\sum_{k=0}^\infty \fff^k\ddd^{k+1} =(0,1,2x,3x^2,\dots)$ which represents the derivative.

\begin{theorem}\label{T:FH}
Fix  $\eta,\theta\in\RR$ and  $\sigma,\tau\geq 0$ such that $\sigma\tau\ne 1$.
Then the infinitesimal generator of the quadratic harness with the above parameters and with $\gamma=-\sigma\tau$ is given by
\begin{equation}\label{gen-free-0}
  \ggen_t=\frac{1}{1+\sigma t}(\eee+\eta\fff+\sigma\fff^2)\ddd_1\varphi_t(\ddd)\ddd, \; t>0,\end{equation}
where
$\varphi_t(z)=\sum_{k=1}^\infty  c_k(t) z^{k-1}$ for small enough $z$  solves the    quadratic equation \begin{equation}
\label{phi-eqtn} (z^2+\eta z +\sigma ) (t+\tau )
   \varphi_t^2+
   ((\theta-t  \eta)z
   -2 t \sigma -\sigma  \tau
   -1) \varphi_t +t \sigma +1=0
\end{equation}
  and the solution is chosen so that $\varphi_t(0)=1$.

  (For $\sigma\tau<1$ this solution is written explicitly in formula \eqref{varphi-sol} below.)
\end{theorem}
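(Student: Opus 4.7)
My strategy is to exploit the uniqueness result of Proposition \ref{P-uni2}. Let $\widetilde{\ggen}_t$ denote the right-hand side of \eqref{gen-free-0} and set $\widetilde{\hhh}_t := \widetilde{\ggen}_t \lp \fff - \fff \lp \widetilde{\ggen}_t$. Both the candidate $\widetilde{\ggen}_t$ (because of the trailing $\ddd$) and the true infinitesimal generator of the quadratic harness (by Remark \ref{Rem:zeros}) have vanishing zero-th component, so each is reconstructed from its associated $\hhh_t$ by the recursion $a_{n+1}(x) = h_n(x) + x a_n(x)$ with $a_0 = 0$, which is read off directly from $\hhh_t = \ggen_t\lp\fff - \fff\lp\ggen_t$. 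By Proposition \ref{P-uni2}, there is at most one element of $\AQH$ with $h_0 = 0$ satisfying equation \eqref{H} with $\gamma = -\sigma\tau$. Hence it is enough to verify that $\widetilde{\hhh}_t$ has $h_0 = 0$ and satisfies that equation.

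To compute $\widetilde{\hhh}_t$ I rely on the two elementary identities $\ddd\lp\fff = \eee$ and $\fff\lp\ddd = \eee - P$ in $\AQH$, where $P := \eee - \fff\lp\ddd = (1,0,0,\dots)$ is the projection onto the constants. The first yields immediately
$$\widetilde{\ggen}_t \lp \fff = \tfrac{1}{1+\sigma t}(\eee+\eta\fff+\sigma\fff^2)\,\ddd_1\,\varphi_t(\ddd).$$
Since polynomials in $\fff$ commute with $\fff$, the second term $\fff\lp\widetilde{\ggen}_t$ only requires moving $\fff$ past $\ddd_1\varphi_t(\ddd)\ddd$; iterating $\fff\lp\ddd^{k+1} = \ddd^k - P\lp\ddd^k$ together with $\ddd_1 = \sum_{k\geq 0}\fff^k\ddd^{k+1}$ gives a closed-form expression for $\widetilde{\hhh}_t$ as a combination of polynomials in $\fff$ whose coefficients are formal series in $\ddd$ built from $\varphi_t(\ddd)$ and $(\varphi_t(\ddd)-\varphi_t(0))/\ddd$-type shifts.

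Once $\widetilde{\hhh}_t$ is in closed form, substituting it together with $\ttt_t = \fff - t\widetilde{\hhh}_t$ into \eqref{H} and imposing $\gamma = -\sigma\tau$ should collapse the difference of the two sides, after extracting coefficients of powers of $\ddd$, to the scalar quadratic relation \eqref{phi-eqtn} satisfied by $\varphi_t$. Vanishing of $h_0$ is automatic since $\widetilde{\ggen}_t$ starts with two zeros. The main obstacle is the bookkeeping around non-commutativity of $\fff$ and $\ddd$: the square $\widetilde{\hhh}_t^{\lp 2}$ produces cross terms of the form $\varphi_t(\ddd)\lp\fff\lp\varphi_t(\ddd)$ whose reduction requires repeated use of $\fff\lp\ddd = \eee - P$, and the reason these terms reorganize cleanly is precisely the free relation $\gamma = -\sigma\tau$, which yields the factorization $\sigma t^2 + (1-\gamma)t + \tau = (1+\sigma t)(\tau + t)$ and forces the leading coefficients $[h_n]_{n+1} = \sigma/(1+\sigma t)$ already identified in the proof of Proposition \ref{P-uni2}; this is exactly the leading behavior built into the prefactor $\frac{1}{1+\sigma t}(\eee+\eta\fff+\sigma\fff^2)$ of the candidate.
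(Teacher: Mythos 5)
Your overall strategy is sound and is in fact the same skeleton as the paper's proof run in the opposite direction: the paper first solves the commutation equation \eqref{H} (via Proposition \ref{P-uni2} and an ansatz of the form \eqref{phi2h}), derives the coefficient recursion \eqref{c-recursion} and its generating-function form \eqref{phi-eqtn}, and only then reconstructs $\ggen_t$ from $\hhh_t$ by iterating \eqref{H2G}; you propose to start from the candidate $\widetilde\ggen_t$, form $\widetilde\hhh_t=\widetilde\ggen_t\lp\fff-\fff\lp\widetilde\ggen_t$, and appeal to the same uniqueness. Your reduction step is correct: using $\ddd\lp\fff=\eee$ and the identity $\ddd_1-\fff\lp\ddd_1\lp\ddd=\ddd$ (and the fact that $\varphi_t(\ddd)$ commutes with $\ddd$), one gets cleanly $\widetilde\hhh_t=\frac{1}{1+\sigma t}(\eee+\eta\fff+\sigma\fff^{2})\varphi_t(\ddd)\ddd$, which is exactly \eqref{phi2h} — in particular no ``$(\varphi_t(\ddd)-\varphi_t(0))/\ddd$-type'' corrections survive, so your anticipated bookkeeping is simpler than you fear. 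The verification direction even buys something: since $\varphi_t$ is taken to be the root of \eqref{phi-eqtn} with $\varphi_t(0)=1$, which for $\sigma\tau\ne1$ is analytic near $z=0$, you can bypass the a priori growth estimate of Lemma \ref{L-growth}, and you never need the ``restrict the search'' step of the paper because for verification it suffices that the factored identities match.

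The genuine gap is that the decisive computation is only asserted, not performed: you write that substituting $\widetilde\hhh_t$ and $\ttt_t=\fff-t\widetilde\hhh_t$ into \eqref{H} ``should collapse'' to the scalar relation \eqref{phi-eqtn}. That collapse is the substance of the paper's Part I: one must expand $\widetilde\hhh_t^{\lp2}$ (which produces the cross terms $\varphi_t(\ddd)\lp(\eee+\eta\fff+\sigma\fff^{2})\lp\varphi_t(\ddd)$, handled by $\ddd\lp\fff=\eee$), check that the terms carrying $\fff+\eta\fff^{2}+\sigma\fff^{3}$ cancel so that $\eee+\eta\fff+\sigma\fff^{2}$ factors out, and then match coefficients of powers of $\ddd$, which is precisely the recursion \eqref{c-recursion}, equivalently the Taylor-coefficient form of \eqref{phi-eqtn}. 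Without carrying this out, the key claim of the theorem is unproven; your closing remarks about $[h_n]_{n+1}=\sigma/(1+\sigma t)$ address only the top-degree coefficient and do not substitute for the full identity. A smaller but necessary point: for the uniqueness argument to identify $\widetilde\ggen_t$ with the generator of the quadratic harness, you must invoke Theorem \ref{T-gen} (the true generator's associated $\hhh_t$ satisfies \eqref{H}) together with Remark \ref{Rem:zerosH} (its $h_0=0$); you use this implicitly but never state it, and only then does Proposition \ref{P-uni2} plus the reconstruction $a_{n+1}(x)=h_n(x)+x\,a_n(x)$, $a_0=0$, finish the argument as you describe.
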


 We note that for each fixed $t$ equation \eqref{phi-eqtn} has two real roots for $z$
close enough to $0$. As  $\varphi_t(z)$ we choose the smaller root when $\sigma\tau<1$ and the larger root when $\sigma\tau>1$.
For $z=0$, equation \eqref{phi-eqtn} becomes
$$
\sigma (t+\tau)\varphi_t^2(0)- (1+\sigma \tau + 2 t \sigma)\varphi_t(0)+1+t\sigma=0,
$$
so  this procedure ensures that   $\varphi_t(0)=1$.


\section{Proof of Theorem \ref{T:FH}} \label{sect-Proof}
The plan of proof is to   solve equation \eqref{H} 
for $\hhh_t$, and then to use equation \eqref{H2G}  to determine $\ggen_t$.

\subsection{Part I of proof: solution of equation \eqref{H} when $\gamma=-\sigma\tau$}

Equation \eqref{H} takes the form
\begin{multline*}
\hhh_t \lp \fff -t\hhh_t^{\lp2}+\sigma\tau\fff\lp\hhh_t-\sigma\tau t\hhh_t^{\lp 2} \\ =\eee+\theta \hhh_t+\eta(\fff-t\hhh_t)+\tau\hhh_t^{\lp2}+\sigma (\fff^{\lp2}-t\hhh_t\lp\fff-t\fff\lp\hhh_t+t^2\hhh_t^{\lp2}).
\end{multline*}
So after simplifications, the equation to solve for the unknown $ \hhh_t$ is
\begin{equation}\label{freeH2}
(1+\sigma t)\hhh_t\lp\fff =\eee+\eta\fff+\sigma\fff^{\lp2}+(\theta-\eta t)\hhh_t-\sigma(t+\tau)\fff\lp\hhh_t+(t+\tau)(1+\sigma t)\hhh_t^{\lp2}\,.
\end{equation}

\begin{lemma}  %
The solution of \eqref{freeH2}   with  the initial element $h_0=0$  is
  \begin{equation}
    \label{phi2h}
    \hhh_t=\frac{1}{1+\sigma t}(\eee+\eta\fff+\sigma\fff^2)\varphi_t(\ddd)\ddd,
  \end{equation}
  where $\varphi_t$ satisfies equation \eqref{phi-eqtn} and $\varphi_t(0)=1$.
\end{lemma}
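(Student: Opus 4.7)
The plan is to verify that the proposed formula satisfies \eqref{freeH2} by direct substitution and then invoke Proposition~\ref{P-uni2} (applicable because $\sigma\tau\ne 1$) for uniqueness. Writing $\Phi_t=\varphi_t(\ddd)$ for brevity, the candidate is $\hhh_t=(1+\sigma t)^{-1}(\eee+\eta\fff+\sigma\fff^2)\,\Phi_t\,\ddd$, and its zeroth component vanishes automatically because of the trailing $\ddd$.

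To do the substitution I would exploit the following relations in $\AQH$: (a) $\ddd\fff=\eee$ (stated in the paper); (b) $\fff$ commutes with $\eee+\eta\fff+\sigma\fff^2$, and $\ddd$ commutes with $\Phi_t$; (c) $\ddd(\eee+\eta\fff+\sigma\fff^2)=\ddd+\eta\eee+\sigma\fff$, via $\ddd\fff=\eee$; and (d) crucially $\Phi_t(\eee-\fff\ddd)=\eee-\fff\ddd$, because $\varphi_t(0)=1$ forces $\Phi_t$ to fix constants when viewed as an operator on $\calP$. Using (a) one immediately gets $(1+\sigma t)\hhh_t\fff=(\eee+\eta\fff+\sigma\fff^2)\Phi_t$. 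The computations of $\fff\hhh_t$ and of $\hhh_t^2$ each produce, after moving all $\fff$'s to the left and reducing by $\fff\ddd=\eee-(\eee-\fff\ddd)$, an extra ``ghost'' term $(\eee+\eta\fff+\sigma\fff^2)(\eee-\fff\ddd)\Phi_t$: for $\fff\hhh_t$ the ghost comes directly from $\fff\ddd$, while for $\hhh_t^2$ it emerges from the stray $\sigma\fff$ summand in $\ddd(\eee+\eta\fff+\sigma\fff^2)$ after one applies (d). When assembled into \eqref{freeH2}, these two ghost contributions carry coefficients $+\sigma(t+\tau)/(1+\sigma t)$ and $-\sigma(t+\tau)/(1+\sigma t)$ respectively, and cancel exactly.

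Once the ghosts cancel, the common left factor $\eee+\eta\fff+\sigma\fff^2$ can be removed, since it acts on $\calP$ as multiplication by the nonzero polynomial $1+\eta x+\sigma x^2$ and is therefore a non-zero-divisor in $\AQH$. After multiplying through by $1+\sigma t$ and tidying, the equation collapses to
\begin{equation*}
(1+2\sigma t+\sigma\tau)\Phi_t=(1+\sigma t)\eee+(\theta-\eta t)\ddd\Phi_t+(t+\tau)(\sigma+\eta\ddd+\ddd^2)\Phi_t^2,
\end{equation*}
which is precisely equation \eqref{phi-eqtn} with $z$ replaced by $\ddd$; because $\ddd$ commutes with $\Phi_t=\varphi_t(\ddd)$, this substitution is unambiguous in $\AQH$, and the identity is valid by hypothesis. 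The main obstacle will be the non-commutative bookkeeping: since $\fff\ddd\ne\ddd\fff=\eee$, one must carefully track the ``commutator defect'' $\eee-\fff\ddd$ as $\fff$'s and $\ddd$'s are moved past each other, and verify that the ghost terms it creates disappear; once that is done the whole algebraic problem reduces to the scalar quadratic identity that $\varphi_t$ satisfies by construction.
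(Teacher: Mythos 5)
Your proposal is correct, but it runs the paper's computation in the opposite direction: the paper treats the coefficients of $\varphi_t$ as unknowns, substitutes the ansatz \eqref{specform} into \eqref{freeH2}, compares coefficients at powers of $\ddd$ to obtain the recursion \eqref{c-recursion}, and only then produces \eqref{phi-eqtn} via generating functions, with Lemma \ref{L-growth} supplying convergence of the series; you instead take $\varphi_t$ solving \eqref{phi-eqtn} with $\varphi_t(0)=1$ as given and verify \eqref{phi2h} by direct substitution, handling $\varphi_t(\ddd)$ as a single element of $\AQH$. Your ``ghost'' cancellation is exactly the paper's cancellation of the $\fff+\eta\fff^2+\sigma\fff^3$ terms (both hinge on $c_1=\varphi_t(0)=1$, which in your setup enters through the identity $\varphi_t(\ddd)(\eee-\fff\ddd)=\eee-\fff\ddd$), and the left-cancellation of $\eee+\eta\fff+\sigma\fff^2$ and the appeal to Proposition \ref{P-uni2} are common to both arguments. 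What your route buys is that it bypasses the coefficient recursion and the growth estimate entirely, since $\varphi_t(\ddd)$ is well defined in $\AQH$ for any formal power series (the sums are entrywise finite); the one point you should make explicit is that $\varphi\mapsto\varphi(\ddd)$ is an algebra homomorphism from formal power series into $\AQH$, and that the analytic identity \eqref{phi-eqtn} valid for small $z$ forces the same identity on Taylor coefficients, hence as formal series, hence after the substitution $z\mapsto\ddd$ --- commutativity of $\ddd$ with $\varphi_t(\ddd)$ alone is not quite the full justification. The trade-off is that your argument presupposes that a power-series solution of \eqref{phi-eqtn} with $\varphi_t(0)=1$ exists (in the paper this existence is what the recursion together with Lemma \ref{L-growth}, or the explicit formula \eqref{varphi-sol}, delivers); since the lemma takes such a $\varphi_t$ as given, this is acceptable here, but it means your proof does not by itself replace that part of the paper's development.
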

\begin{proof}
Since $t>0$ is fixed, we suppress the dependence on $t$ and we use Remark \ref{Rem:zerosH} to write
$\hhh_t=\hhh=(0,h_1(x),\dots)$. From \eqref{freeH2} we read out that $h_1(x)=\frac{1}{1+\sigma t}(1+\eta x+\sigma x^2)$.

From Proposition \ref{P-uni2} we see   that \eqref{freeH2} has a unique solution.
In view of uniqueness, we seek the solution in a special form
\begin{equation}
  \label{specform}
\hhh=\frac{1}{1+\sigma t}(\eee+\eta\fff+\sigma\fff^{\lp2})\lp\sum_{k=1}^\infty c_k\ddd^{\lp k}
\end{equation}
with $c_1=1$ and $c_k=c_k(t)\in\RR$.
Note that
\begin{multline*}
\hhh^2=\frac{1}{(1+\sigma t)^2}(\eee+\eta\fff+\sigma\fff^{\lp2})\lp\sum_{k=1}^\infty c_k\ddd^{\lp k}\lp(\eee+\eta\fff+\sigma\fff^{\lp2})\lp\sum_{j=1}^\infty c_j\ddd^{\lp j}
\\=
\frac{1}{(1+\sigma t)^2}(\eee+\eta\fff+\sigma\fff^{\lp2})\lp\sum_{k=1}^\infty c_k\ddd^{\lp k} \lp\sum_{j=1}^\infty c_j\ddd^{\lp j}
\\
+\frac{\eta}{(1+\sigma t)^2}(\eee+\eta\fff+\sigma\fff^{\lp2})\lp \sum_{k=1}^\infty c_k\ddd^{\lp k-1} \lp\sum_{j=1}^\infty c_j\ddd^{\lp j}\\
+\frac{\sigma}{(1+\sigma t)^2}(\fff+\eta\fff^2+\sigma\fff^{\lp3})\lp \sum_{j=1}^\infty c_j\ddd^{\lp j}
+\frac{\sigma}{(1+\sigma t)^2}(\eee+\eta\fff+\sigma\fff^{\lp2})\lp  \sum_{k=2}^\infty c_k\ddd^{\lp k-2}\lp \sum_{j=1}^\infty c_k\ddd^{\lp j}\,.
\end{multline*}

Inserting this into \eqref{freeH2} we get

\begin{multline*}%
\eee+\eta\fff + \sigma\fff^{\lp2}+ \left(\eee+\eta\fff + \sigma\fff^{\lp2}\right)\sum_{k=2}^\infty c_{k }\ddd^{\lp k-1}
=\eee+\eta\fff + \sigma\fff^{\lp2}\\+\frac{\theta-t\eta}{1+\sigma t}(\eee+\eta\fff+\sigma\fff^{\lp2})\lp\sum_{k=1}^\infty c_k\ddd^{\lp k} -\frac{\sigma(t+\tau)}{1+\sigma t}(\fff+\eta\fff^{\lp2}+\sigma\fff^{\lp3})\lp\sum_{k=1}^\infty c_k\ddd^{\lp k}
\\
+ \frac{t+\tau}{1+\sigma t}\Big[(\eee+\eta\fff+\sigma\fff^{\lp2})\lp\sum_{k=1}^\infty c_k\ddd^{\lp k} \lp\sum_{j=1}^\infty c_j\ddd^{\lp j}
+\eta(\eee+\eta\fff+\sigma\fff^{\lp2})\lp \sum_{k=1}^\infty c_k\ddd^{\lp k-1} \lp\sum_{j=1}^\infty c_j\ddd^{\lp j}\\
+\sigma(\fff+\eta\fff^2+\sigma\fff^{\lp3})\lp \sum_{j=1}^\infty c_j\ddd^{\lp j}
+\sigma(\eee+\eta\fff+\sigma\fff^{\lp2})\lp  \sum_{k=2}^\infty c_k\ddd^{\lp k-2}\lp \sum_{j=1}^\infty c_j\ddd^{\lp j}\Big]\,.
\end{multline*}
The terms with $\left(\fff+\eta\fff^2 + \sigma\fff^{\lp3}\right)$ cancel out, so $\left(\eee+\eta\fff + \sigma\fff^{\lp2}\right)$ factors out.
We further restrict our search for the solution by requiring that %
the remaining factors   match, i.e.
\begin{multline*}%
   \sum_{k=1}^\infty c_{k+1 }\ddd^{\lp k }
=\frac{\theta-t\eta}{1+\sigma }\sum_{k=1}^\infty c_k\ddd^{\lp k}  \\
+ \frac{t+\tau}{1+\sigma t}\Big(\sum_{k=1}^\infty c_k\ddd^{\lp k} \lp\sum_{j=1}^\infty c_j\ddd^{\lp j}
+\eta  \sum_{k=1}^\infty c_k\ddd^{\lp k-1}  \lp\sum_{j=1}^\infty c_j\ddd^{\lp j}
+\sigma   \sum_{k=2}^\infty c_k\ddd^{\lp k-2}\lp \sum_{j=1}^\infty c_j\ddd^{\lp j}\Big)\,.
\end{multline*}
Collecting the coefficients at the powers of $\ddd$ we get
\begin{multline*}%
   \sum_{k=1}^\infty c_{k+1 }\ddd^{\lp k }
=\frac{\theta-t\eta}{1+\sigma t}\sum_{k=1}^\infty c_k\ddd^{\lp k}  \\
+ \frac{t+\tau}{1+\sigma t}\Big( \sum_{k=2}^\infty \sum_{j=1}^{k-1} c_j  c_{k-j}\ddd^{\lp k}
+\eta  \sum_{k=1}^\infty  \sum_{j=0}^{k-1} c_{j+1}  c_{k-j}\ddd^{\lp k}
+\sigma   \sum_{k=1}^\infty  \sum_{j=0}^{k-1} c_{j+2}  c_{k-j}\ddd^{\lp k}\Big)\,.
\end{multline*}

We now compare the coefficients at the powers of $\ddd$. Since $\sigma\tau\ne 1$, for $k=1$ we get
$$
c_2=\frac{\theta-\eta t}{1+\sigma t}+\frac{t+\tau}{1+\sigma t}\eta+\frac{t+\tau}{1+\sigma t}\sigma c_2\,.
$$
So
$c_2=(\theta+\eta\tau)/(1-\sigma\tau)=\beta$ (say).

For $k\geq 2$, we have the recurrence
\begin{equation}
  \label{c-recursion}
  c_{k+1}=\frac{\theta-\eta t}{1+\sigma t}c_k+\frac{t+\tau}{1+\sigma t}\Big( \sum_{j=1}^{k-1} c_j  c_{k-j}+\eta \sum_{j=0}^{k-1} c_{j+1}  c_{k-j}+\sigma  \sum_{j=0}^{k-1} c_{j+2}  c_{k-j}\Big)\,.
\end{equation}

We solve this recurrence by  the method of  generating functions.  One can proceed here with a formal
power series, and then invoke uniqueness to verify that the power series has positive radius of convergence. Or one can use an
a'priori bound from Lemma \ref{L-growth} below and restrict the argument of the generating function to small enough $|z|$. Note
that in this argument, $t$ is fixed.

Let $\varphi(z)=\sum_{k=1}^\infty c_kz^{k-1}$. Then
\begin{multline*}%
\varphi(z)=1+\beta z+\sum_{k=2}^\infty c_{k+1}z^k\\
= \frac{\theta-\eta t}{1+\sigma t} z(\varphi(z)-1)+
\frac{t+\tau}{1+\sigma t} z^2\varphi^2(z)+\eta z\frac{t+\tau}{1+\sigma t} (\varphi^2(z)-1) \\
+\sigma \frac{t+\tau}{1+\sigma t} (\varphi(z)(\varphi(z)-1) -\beta z).
\end{multline*}
This gives     quadratic equation \eqref{phi-eqtn} for $\varphi=\varphi_t$.

\end{proof}

 The following technical lemma assures that the series  $\varphi(z)=\sum_{k=1}^\infty c_kz^{k-1}$ converges for all small enough $|z|$.
\begin{lemma}\label{L-growth} Suppose $\sigma\tau\ne1$ and $\{c_k\}$ is a solution of recursion \eqref{c-recursion} with $c_1=1$ for a fixed $t$. Then for every $p>1$ there exists a constant $M$ such that
\begin{equation}\label{c-bound}
|c_k|\leq \frac{M^{k-2}}{k^p} \mbox{ for all  $k\geq 3$.}
\end{equation}

\end{lemma}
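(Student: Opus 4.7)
The plan is to realize $\{c_k\}$ as the Taylor coefficients of a function analytic in a neighborhood of $z=0$, apply Cauchy's estimate to obtain an exponential bound $|c_k|\le K\rho^{k-1}$, and then absorb the polynomial factor $k^p$ by enlarging $M$. The derivation preceding this statement already shows that the formal series $\varphi(z)=\sum_{k\ge1}c_k z^{k-1}$ satisfies the quadratic equation \eqref{phi-eqtn}, so the idea is to run the correspondence in reverse: solve \eqref{phi-eqtn} by analytic means and then invoke uniqueness of the recursion to identify the two sequences.

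For the analytic step I would view \eqref{phi-eqtn} as a quadratic in the unknown $\varphi$ with polynomial-in-$z$ coefficients. A routine computation shows that its discriminant at $z=0$ equals $(1-\sigma\tau)^2$, which is nonzero by hypothesis. Hence the two roots of \eqref{phi-eqtn} at $z=0$ are simple, and the quadratic formula applied with an analytic branch of the square root (equivalently, the implicit function theorem) produces a unique holomorphic solution $\widetilde\varphi$ on some disk $|z|<R$ with $\widetilde\varphi(0)=1$. Expanding $\widetilde\varphi(z)=\sum_{k\ge 1}\tilde c_k z^{k-1}$ and reading off coefficients of $z^{k-1}$ on both sides of \eqref{phi-eqtn} produces exactly the recursion \eqref{c-recursion} with $\tilde c_1=1$; since the coefficient of $c_{k+1}$ in that recursion (after moving the $j=k-1$ term of the third sum to the left) reduces to the nonzero factor $(1-\sigma\tau)/(1+\sigma t)$, the recursion has a unique solution, and we conclude $\tilde c_k=c_k$ for all $k$.

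To convert analyticity into the claimed bound \eqref{c-bound}, I would apply Cauchy's estimate on a circle $|z|=r$ with $0<r<R$: this gives $|c_k|\le K\rho^{k-1}$ with $\rho=1/r$ and $K=\sup_{|z|=r}|\widetilde\varphi(z)|$. Given $p>1$, fix any $M>\rho$. The desired inequality $K\rho^{k-1}\le M^{k-2}/k^p$ rearranges to $K\rho\, k^p \le (M/\rho)^{k-2}$, whose right-hand side dominates the polynomial left-hand side for all $k$ past some $k_0$; the finitely many small indices $3\le k<k_0$ yield finitely many further constraints on $M$, each satisfiable by enlarging $M$.

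The principal obstacle in this plan is the discriminant identification, since it is precisely this algebraic fact that ties the standing hypothesis $\sigma\tau\ne 1$ to the local analyticity of $\widetilde\varphi$ at the origin and hence to convergence of the formal series. Once the discriminant is in hand, the rest is a textbook application of Cauchy's estimate together with the standard comparison of polynomial against exponential growth.
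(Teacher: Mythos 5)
Your route is genuinely different from the paper's: the authors prove the bound by a bare-hands induction, splitting the three convolution sums in \eqref{c-recursion}, comparing them with $\sum_k 1/k^2$, and choosing $M$ so large that the induction closes, with $|c_2|$ carried through as an arbitrary constant. Your analytic ingredients do check out: the discriminant of \eqref{phi-eqtn} at $z=0$ is indeed $(1-\sigma\tau)^2$; equivalently, the $\varphi$-derivative of the left side of \eqref{phi-eqtn} at $(z,\varphi)=(0,1)$ equals $\sigma\tau-1\neq 0$, so the holomorphic implicit function theorem gives the analytic branch even when $\sigma=0$ (where the leading coefficient vanishes at $z=0$ and the quadratic-formula form of the argument needs care); the coefficient extraction at order $z^k$, $k\ge 2$, does reproduce \eqref{c-recursion} with pivot $(1-\sigma\tau)/(1+\sigma t)$; and the passage from a Cauchy estimate $|c_k|\le K\rho^{k-1}$ to \eqref{c-bound} is routine. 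This is essentially the alternative the authors themselves gesture at (``proceed with a formal power series and invoke uniqueness''), and it yields a stronger geometric bound; the paper's induction, by contrast, needs no analytic input and applies to a wider family of sequences.

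That wider family is exactly where your proof has a genuine gap. The recursion \eqref{c-recursion} is stated only for $k\ge 2$, so together with $c_1=1$ it determines $c_3,c_4,\dots$ from $c_1$ and $c_2$ but leaves $c_2$ entirely free; the lemma as stated (and as the paper proves it, since the choice of $M$ involves $|c_2|$ as an arbitrary constant) covers every such sequence. Extracting coefficients from \eqref{phi-eqtn}, however, also produces the order-$z$ relation, which forces $\tilde c_2=(\theta+\eta\tau)/(1-\sigma\tau)$. So the assertion that ``the recursion has a unique solution'' with only $\tilde c_1=1$ imposed is false, and your identification $\tilde c_k=c_k$ is only valid for the particular solution with $c_2=(\theta+\eta\tau)/(1-\sigma\tau)$ --- the one used later in the paper, but not the full content of the lemma. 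The repair is easy: either state that you prove the lemma for that initial value and that this suffices for its application, or note that for arbitrary $c_2$ the generating function satisfies \eqref{phi-eqtn} with an extra term $\lambda z$ added to the left-hand side, where $\lambda$ is the (then nonzero) order-$z$ coefficient; this perturbation leaves the discriminant at $z=0$ equal to $(1-\sigma\tau)^2$, so the same implicit-function-theorem and Cauchy-estimate argument, now matching $\tilde c_2=c_2$ as well, goes through verbatim.
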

\begin{proof} We will prove the case $p=2$ only, as this suffices to justify the convergence of the series.

Solving \eqref{c-recursion} for $c_{k+1}$, which appears also in one term on the right hand side of \eqref{c-recursion}, we get

$$
\frac{|1-\sigma\tau|}{1+\sigma t} |c_{k+1}|\leq
\frac{|\theta-\eta t|}{1+\sigma t}|c_k|+\frac{t+\tau}{1+\sigma t}\Big( \sum_{j=1}^{k-1} |c_j c_{k-j}|+|\eta| \sum_{j=0}^{k-1} |c_{j+1} c_{k-j}|+\sigma  \sum_{j=0}^{k-2} |c_{j+2} c_{k-j}|\Big)
$$
Since we are not going to keep track of the constants, we simplify this as

\arxiv{
$$ |c_{k+1}|\leq
\frac{|\theta-\eta t|}{1-\sigma\tau}|c_k|+\frac{t+\tau}{|1-\sigma\tau|}\Big( \sum_{j=1}^{k-1} |c_j c_{k-j}|+|\eta| \sum_{j=0}^{k-1} |c_{j+1} c_{k-j}|+\sigma  \sum_{j=0}^{k-2} |c_{j+2} c_{k-j}|\Big)
$$
}
\begin{equation}
  \label{c-bound2}
   |c_{n+1}|\leq
A|c_n|+B\Big( \sum_{k=1}^{n-1} |c_kc_{n-k}|+ \sum_{k=0}^{n-1} |c_{k+1} c_{n-k}|+   \sum_{k=0}^{n-2} |c_{k+2} c_{n-k}|\Big)
\end{equation}
with $A=\frac{|\theta-\eta t|}{|1-\sigma\tau|}$ and $B=\frac{t+\tau}{|1-\sigma\tau|}(1+|\eta|+\sigma)$. Here,  $n\geq 2$.

We now choose $M\geq 1$ large enough so that \eqref{c-bound} holds for $k=3,4,5,6$,
and we also require that
\begin{equation}
  \label{M-choice}
 4A+ (24|c_2|+175)B\leq M.
\end{equation}
  We now proceed by induction and assume
that \eqref{c-bound} holds for all indices $k$ between $3$ and $n$ for some $n\geq 6$.

To complete the induction step we provide bounds for the sums on the right hand side of \eqref{c-bound2}. The first sum is
handled as follows:
$$
\sum_{k=1}^{n-1} |c_{k}c_{n-k}|=2|c_1c_{n -1}|+2|c_2 c_{n-2}|+\sum_{k=3}^{n-3}|c_{k} c_{n-k}|.
$$
We   apply the induction bound \eqref{c-bound} to $c_3,\dots,c_{n-1}$. Noting that  $c_1=1$ we get
\begin{multline*}
\sum_{k=1}^{n-1} |c_kc_{n-k}|\leq 2\frac{M^{n-3}}{(n-1)^2}+2|c_2| \frac{M^{n-4}}{(n-2)^2}+M^{n-4}\sum_{k=3}^{n-3}\frac{1}{k^2(n-k)^2}\\
\leq 8\frac{M^{n-2}}{(n+1)^2}+8|c_2|\frac{M^{n-2}}{(n+1)^2}+M^{n-2}\sum_{k=3}^{[n/2]}\frac{1}{k^2(n-k)^2}
+M^{n-2}\sum_{k=[n/2]+1}^{n-3}\frac{1}{k^2(n-k)^2}\\
\leq 8\frac{M^{n-2}}{(n+1)^2}+8|c_2|\frac{M^{n-2}}{(n+1)^2}+\frac{4 M^{n-2}}{n^2}\sum_{k=3}^{[n/2]}\frac{1}{k^2}+
+\frac{4M^{n-2}}{n^2}\sum_{k=[n/2]+1}^{n-2}\frac{1}{(n-k)^2}
\\ < \frac{M^{n-2}}{(n+1)^2}\left(
8+8|c_2|+16\pi^2/3 \right)<(61+8|c_2|)\frac{M^{n-2}}{(n+1)^2} .
\end{multline*}
Here we used several times the  bound $(n+1)/(n-2)\leq 2$ for $n\geq 6$, inequality $M\geq 1$, and we estimated two finite sums
by the infinite series $\sum_{k=1}^\infty 1/k^2=\pi^2/6$.

We proceed similarly with the second sum:
\begin{multline*}
\sum_{k=0}^{n-1} |c_{k+1}c_{n-k}|=2|c_1c_{n }|+2|c_2 c_{n-1}|+\sum_{k=2}^{n-3}|c_{k+1} c_{n-k}| \\ \leq 2
\frac{M^{n-2}}{n^2}+2|c_2|\frac{M^{n-3}}{(n-1)^2}+\frac{4M^{n-3}}{n^2}\sum_{k=1}^{[n/2]}\frac{1}{k^2}+
\frac{4M^{n-3}}{n^2}\sum_{k=[n/2]+1}^{n-3}\frac{1}{(n-k)^2}
\\ \leq \frac{M^{n-2}}{(n+1)^2}\left(8+8 |c_2|+16\pi^2/3\right)\leq
\frac{M^{n-2}}{(n+1)^2}\left(61+8 |c_2|\right).
\end{multline*}

Finally, we bound the third sum on the right hand side of \eqref{c-bound2}. We get
\begin{multline*}
\sum_{k=0}^{n-2} |c_{k+2}c_{n-k}|= 2|c_2 c_{n}|+\sum_{k=1}^{n-3}|c_{k+2} c_{n-k}| \\ \leq
2|c_2|\frac{M^{n-2}}{n^2}+\frac{4M^{n-2}}{n^2}\sum_{k=1}^{[n/2]}\frac{1}{k^2}+
\frac{4M^{n-2}}{n^2}\sum_{k=[n/2]+1}^{n-3}\frac{1}{(n-k)^2}
\\ \leq \frac{M^{n-2}}{(n+1)^2}\left( 8 |c_2|+16\pi^2/3\right)\leq
\frac{M^{n-2}}{(n+1)^2}\left(8 |c_2|+53\right).
\end{multline*}

Combining   these bounds together and using  the induction assumption to the first term on the right hand side of
\eqref{c-bound2} we get
 $$|c_{n+1}|\leq  \frac{M^{n-2}}{(n+1)^2}\left(4A+ (24|c_2|+175)B\right) $$
 In view of \eqref{M-choice}, this shows that
 $|c_{n+1}|\leq  \frac{M^{n-1}}{(n+1)^2}$, thus completing the proof of \eqref{c-bound} by induction.
\end{proof}

\subsection{Part II of proof: solution of equation \eqref{H2G}}

We now use \eqref{phi2h} to determine $\ggen_t$.

Since $\eee-\fff\ddd=(1,0,0,\dots)$,
from Remark \ref{Rem:zeros}  it follows that $\ggen_t\lp\fff\lp\ddd= \ggen_t$. Therefore, multiplying \eqref{H2G} by $\ddd$ from the right we get
$$
\ggen_t=\fff\lp\ggen_t\lp\ddd+ \hhh_t\lp\ddd.
$$
Iterating this, we get
\begin{equation}
  \label{H2G-sol}
 \ggen_t=\sum_{k=0}^\infty \fff^k \lp\hhh_t \lp\ddd^{k+1},
\end{equation}
which is well defined as the series consists of finite sums elementwise. Since $\hhh_t$ is given by \eqref{phi2h}, we get
\begin{multline} \label{AAA-sum}
\ggen_t=\frac{1}{1+\sigma t}\sum_{k=0}^\infty \fff^k \lp(\eee+\eta\fff+\sigma\fff^2) \varphi_t(\ddd)\ddd \lp\ddd^{k+1}.
\\=\frac{1}{1+\sigma t} \lp(\eee+\eta\fff+\sigma\fff^2) \sum_{k=0}^\infty \fff^k\lp\ddd^{k+1}\varphi_t(\ddd)\ddd.
\end{multline}
We now note that
 \begin{equation}
   \label{series4D1}
 \sum_{k=0}^\infty \fff^k \ddd^{k+1}=  \ddd_1.
 \end{equation}
 (This can be seen either by examining each element of the sequence, or by solving the equation $\ddd_1\fff-\fff\ddd_1=\eee$, which is just a product formula for the derivative, by the previous technique.
 The latter equation  is of course of the same form as \eqref{H2G}.)

Replacing the series in \eqref{AAA-sum} by the right hand side of \eqref{series4D1} we get \eqref{gen-free-0}.
This ends the proof of Theorem \ref{T:FH}.

\section{Integral representation}\label{Sect:IR}
In this section it is more convenient to use  the  linear operators on $\calP$ instead of the sequences of polynomials. We
assume that the polynomial process $\{\pp_{s,t}:0\leq s\leq t\}$ corresponds to a quadratic harness from Definition \ref{D2},
and as before we use the parameters $\eta,\theta,\sigma,\tau$ and $\gamma$ to describe the quadratic harness. Infinitesimal
generators of several quadratic harnesses, all different than those in Theorem \ref{T:FH}, have been studied in this language
by several authors.

For    quadratic harnesses with parameters $\eta=\sigma=0$ and    $\gamma=q\in(-1,1)$, according to
\cite{Bryc-Wesolowski-2013-gener}, the infinitesimal generator  $\gen_t$ acting on a polynomial $f$ is
\begin{equation}
  \label{gen-q-meixner}
  \gen_t(f)(x)=\int_\RR \frac{\partial }{\partial x} \left( \frac{f(y)-f(x)}{y-x}\right)\nu_{x,t}(dy),
\end{equation}
where $\nu_{x,t}(dy)$ is a uniquely determined probability measure. By inspecting the recurrences for the orthogonal
polynomials $\{Q_n\}$ and $\{W_n\}$, from \cite[Theorem 1.1(ii)]{Bryc-Wesolowski-2013-gener} one can read out that for
$q^2t\geq (1+q)\tau$   probability measure $\nu_{x,t}$ can be expressed in terms of  the transition probabilities
$P_{s,t}(x,dy)$    of the Markov process by the formula $\nu_{x,t}(dy)=P_{tq^2-(1+q)\tau,t}(\theta+qx,dy)$. In this form, the
formula coincides with   Anshelevich \cite[Corollary 22]{Anshelevich:2011} who considered the case $\eta=\theta=\tau=\sigma=0$
and $\gamma=q\in[-1,1]$. (However, the domain of the generator  in \cite{Anshelevich:2011} is much larger than the
polynomials.) Earlier results in Refs.
 \cite[page 392]{Biane:1998b},  \cite[Example 4.9]{BKS97}, and   \cite{Bryc-free-gen:2009}
dealt with   infinitesimal generators for  quadratic harnesses such that $\sigma=\eta=\gamma=0$.

 The following result gives an  explicit formula for the infinitesimal generator of
 the evolution corresponding to the ``free quadratic harness''
 in the integral form similar to \eqref{gen-q-meixner}. The main new feature   is the presence of an extra quadratic
  factor in front of the integral in expression  \eqref{gen-free} for the infinitesimal  generator.
 Denote
\begin{equation}\label{alphabeta}
\alpha=\frac{\eta+\theta\sigma}{1-\sigma\tau},\ \beta=\frac{\eta\tau+\theta}{1-\sigma\tau}.
\end{equation}
\begin{theorem}\label{T1} Fix  $\sigma,\tau\ge0$ such that
$\sigma\tau<1$ and $\eta,\theta\in\RR$ such that $1+\alpha\beta>0$.  Let  $\gamma=-\sigma\tau$ and let $\varphi_t$ be a continuous solution of \eqref{phi-eqtn} with $\varphi_t(0)=1$.

Then $\varphi_t$   is a moment generating function of a unique probability measure $\nu_t$ and the generator of the quadratic harness with the above parameters on $p\in\calP$ is
\begin{equation}\label{gen-free}
\gen_t(p)(x)=\frac{1+\eta x+\sigma x^2}{1+\sigma t}\int  \frac{\partial }{\partial x}\left(\frac{p(y)-p(x)}{y-x}\right)\nu_{t}(dy), \; t>0.
\end{equation}

 \end{theorem}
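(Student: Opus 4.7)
The argument splits into two largely independent tasks: (I) identifying $\varphi_t$ as the moment generating function of a probability measure $\nu_t$ on $\RR$, and (II) transforming the algebraic generator formula \eqref{gen-free-0} into the integral representation \eqref{gen-free}.

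For Part (I), I would first solve the quadratic equation \eqref{phi-eqtn} explicitly, obtaining the candidate
\begin{equation*}
\varphi_t(z) = \frac{-B(z) - \sqrt{B(z)^2 - 4 A(z) C}}{2 A(z)},
\end{equation*}
with $A(z) = (z^2 + \eta z + \sigma)(t+\tau)$, $B(z) = (\theta - t\eta)z - (1 + 2 t \sigma + \sigma \tau)$, and $C = 1 + t\sigma$. Evaluating at $z = 0$, the discriminant simplifies to $(1 - \sigma \tau)^2$, which forces the minus sign in front of the square root so that $\varphi_t(0) = 1$. I would then pass to the Cauchy transform $G_t(w) = w^{-1} \varphi_t(1/w)$ and apply the Stieltjes--Perron inversion formula to extract a candidate measure $\nu_t$: its absolutely continuous component has density $x \mapsto \pi^{-1}\operatorname{Im} G_t(x - i 0^+)$ supported on the interval where $B^2 - 4AC$ is negative, together with at most two atoms at the real zeros of $A$. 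Verifying non-negativity of the density and total mass $1$ should single out exactly the constraint $1 + \alpha \beta > 0$ (with $\alpha,\beta$ as in \eqref{alphabeta}) that appears in the hypothesis.

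For Part (II), I would use the isomorphism between $\AQH$ and linear operators on $\calP$: $\fff$ becomes multiplication by $x$, $\ddd$ becomes the divided difference $q \mapsto (q(x)-q(0))/x$, and $\ddd_1$ becomes $\partial_x$. The key elementary identity is
\begin{equation*}
\sum_{k=0}^\infty y^k (\ddd^k q)(x) = \frac{x q(x) - y q(y)}{x - y},
\end{equation*}
proved by expanding $q(x) = \sum_j a_j x^j$ and summing a geometric series in $k$. Assuming Part (I), so that $\varphi_t(z) = \sum_{k \ge 0} z^k \int y^k\, \nu_t(dy)$, this identity gives
\begin{equation*}
(\varphi_t(\ddd)\, \ddd\, p)(x) = \int \frac{x (\ddd p)(x) - y (\ddd p)(y)}{x - y}\, \nu_t(dy) = \int \frac{p(x) - p(y)}{x - y}\, \nu_t(dy),
\end{equation*}
where the second equality uses $x(\ddd p)(x) = p(x) - p(0)$. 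Differentiating under the integral sign with respect to $x$ and premultiplying by $(1+\eta x + \sigma x^2)/(1+\sigma t)$ converts \eqref{gen-free-0} exactly into \eqref{gen-free}.

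The principal obstacle is Part (I): showing that the analytic function singled out by \eqref{phi-eqtn} is genuinely the moment generating function of a bona fide probability measure under precisely the restriction $1 + \alpha\beta > 0$. The measure $\nu_t$ is expected to be a free-Meixner type law (possibly with up to two atoms), and the case analysis around the real zeros of $A(z)$ and the sign of $B^2 - 4AC$ is where the hypothesis on $\alpha,\beta$ must be invoked. Part (II) reduces to a dictionary translation plus the geometric-series identity above, and is essentially routine once Part (I) is in hand.
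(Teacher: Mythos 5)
Your Part (II) is correct, and it takes a genuinely different (and somewhat slicker) route than the paper: the paper first converts \eqref{specform} into the integral form \eqref{H-int} for the operator $\hh_t$ and then passes from $\hh_t$ to $\gen_t$ by induction on monomials using the operator form of \eqref{H2G}, $\gen_t(x^{n+1})=\hh_t(x^n)+x\gen_t(x^n)$; you instead translate \eqref{gen-free-0} directly through the dictionary ($\fff$ = multiplication by $x$, $\ddd$ = divided difference at $0$, $\ddd_1$ = derivative, rightmost factor acting first) and the summation identity $\sum_{k\ge0}y^k(\dd^kq)(x)=(xq(x)-yq(y))/(x-y)$, which is a valid shortcut that avoids the induction. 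Your sign selection for the root of \eqref{phi-eqtn} (discriminant $(1-\sigma\tau)^2$ at $z=0$) also matches the paper's \eqref{varphi-sol}.

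The genuine gap is in Part (I). The statement ``verifying non-negativity of the density and total mass $1$ should single out exactly the constraint $1+\alpha\beta>0$'' defers precisely the content of the theorem's first assertion. To make it a proof you must actually establish: (a) nonnegativity of $\pi^{-1}\operatorname{Im}G_t(x-i0^+)$ on the interval where the discriminant is negative; (b) nonnegativity of the residues at the real zeros of $\sigma w^2+\eta w+1$, i.e.\ that the possible atoms carry nonnegative mass (this is where the case analysis is delicate); (c) $G_t(w)\sim 1/w$ as $w\to\infty$, so the total mass is $1$; and (d) compact support (or at least a geometric moment bound), which is what justifies the step $\varphi_t(z)=\sum_{k\ge0}z^k\int y^k\nu_t(dy)$ near $z=0$ that your Part (II) silently uses, and which also gives uniqueness of $\nu_t$. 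None of this is carried out, and it is exactly where $1+\alpha\beta>0$ must enter. The paper closes this step without any case analysis by recognizing \eqref{duze-G0} as the Cauchy--Stieltjes transform of a free-Meixner-type law from \cite[Proposition 2.3]{Saitoh-Yoshida01}, with explicit parameters $c_{SY},\alpha_{SY},a_{SY},b_{SY}$; the hypotheses $\sigma\tau<1$ and $1+\alpha\beta>0$ guarantee admissibility there (for instance $b_{SY}=(1+\sigma t)(t+\tau)(1+\alpha\beta)/(1-\sigma\tau)>0$), and compact support, positivity and uniqueness all come for free with that identification. So either cite Saitoh--Yoshida as the paper does, or carry out the Stieltjes--Perron inversion and atom analysis in full (essentially reproving their Theorem 2.1); as written, your Part (I) is a plan rather than a proof.
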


\begin{proof}
Since $\sigma\tau<1$, the solution of quadratic equation \eqref{phi-eqtn} with $\varphi_t(0)=1$ is
\begin{multline}\label{varphi-sol}
\varphi_t(z)=\frac{z(t  \eta - \theta) +2 t \sigma +\sigma  \tau +1}{2 (t+\tau )(z^2 +z\eta +\sigma  ) } \\-\frac{ \sqrt{
(z (t  \eta   - \theta) +2 t \sigma +\sigma  \tau +1)^2-4 (z^2 +z\eta +\sigma ) (1+t \sigma ) (t+\tau )}}{2(t+\tau ) (z^2 +z\eta +\sigma ) }.
\end{multline}
(We omit the  re-write for the case $\sigma=\eta=0$.)

\arxiv{ To avoid issues with $\sigma=\eta=0$, it is better to rewrite   \eqref{varphi-sol}  as
$$
\varphi_t(z)=\frac{2 (1+t \sigma )  }{ A+\sqrt{A^2-4 B}}.
$$
with $A=z(t  \eta - \theta) +2 t \sigma +\sigma  \tau +1$ and $B=(z^2 +z\eta +\sigma ) (1+t \sigma ) (t+\tau )$ }

 We will identify $\nu_t$ through its Cauchy-Stieltjes transform
    $$G_{\nu_t}(z)= \int\frac{1}{z-x}\nu_t(dx)$$
 which in our case will be well defined for all real $z$ large enough.

To this end we compute
\begin{multline}\label{duze-G0}
\varphi_t(1/z)/z= \frac{(1+\sigma\tau+2\sigma t)z+t\eta-\theta}{2(t+\tau)(\sigma z^2+\eta z+1)}\\
-\frac{\sqrt{\left[(1-\sigma\tau)z-(\alpha+\sigma\beta)t-\beta-\alpha\tau\right]^2-4(1+\sigma t)(t+\tau)(1+\alpha\beta)}}{2(t+\tau)(\sigma z^2+\eta z+1)}.
\end{multline}
Under our assumptions, the above expression is well defined for real large enough $z\in\RR$.

Expression \eqref{duze-G0}  coincides with  the Cauchy-Stieltjes transform in \cite[Proposition 2.3]{Saitoh-Yoshida01},    with their parameters
$$c_{SY}=\frac{1-\sigma\tau}{1+\sigma t}, \; \alpha_{SY}=\frac{\eta  \tau +\theta }{1-\sigma  \tau },\; a_{SY}=\frac{2 \eta  \tau +\theta  \sigma  \tau +\theta +t (\eta  \sigma
   \tau +\eta +2 \theta  \sigma )}{(\sigma  \tau -1)^2}$$
    and $$
   b_{SY}=\frac{(\sigma  t+1) (t+\tau ) \left(\eta ^2 \tau +\eta  \theta
   (\sigma  \tau +1)+\theta ^2 \sigma +(1-\sigma  \tau
   )^2\right)}{(1-\sigma  \tau )^3}.
   $$
   (We added subscript "SY"  to avoid confusion with our use of $\alpha$ in \eqref{alphabeta}.)

   This shows that $\varphi_t(1/z)/z$ is a Cauchy-Stieltjes transform of a unique compactly-supported probability measure $\nu_t$.
For a more detailed description of measure  $\nu_t$ and explicit formulas for its discrete and absolutely continuous components
we refer to \cite[Theorem 2.1]{Saitoh-Yoshida01}; see also Remark \ref{R1} below.

   It is well known that a  Cauchy-Stieltjes transform is an analytic function in the upper complex plane, determines measure uniquely, and if it extends to  real $z$ with $|z|$ large enough then the corresponding moment generating  function is well defined for all $|z|$ small enough and is given by $G_{\nu_t}(1/z)/z=\varphi_t(z)$.
This shows that $\varphi_t(z)$ is the moment generating function of the probability measure $\nu_t$.

Next we observe that \eqref{specform} in the operator notation is
$$\hh_t(x^n)=\frac{1+\eta x+\sigma x^2}{1+\sigma t}\sum_{k=1}^{n} c_{k}(t)x^{n-k} $$
 Writing   $c_k(t)=\int y^{k-1}\nu_t(dy)$, we therefore get
\begin{equation}\label{H-int}
\hh_t(f)(x)=\frac{1+\eta x+\sigma x^2}{1+\sigma t }\int \frac{f(y)-f(x)}{y-x}\nu_{t}(dy).
\end{equation}
 Since the operator version of relation \eqref{H2G} is $\gen_t(x^{n+1})=\hh_t(x^{n})+x\gen(x^n)$, we derive  \eqref{gen-free} from \eqref{H-int} by induction on $n$; for a similar reasoning see  \cite[Lemma 2.4]{Bryc-Wesolowski-2013-gener}.

\end{proof}

\begin{remark}
  \label{R1}
  Denote by  $\pi_{t,\eta,\theta,\sigma,\tau}(dx)$
  the univariate law of  $X_t$ for the free quadratic harness $(X_t)$ with parameters $\eta,\theta,\sigma,\tau$ as in \cite[Section 3]{Bryc-Matysiak-Wesolowski-05}. Then $\nu_t$ is given by
  \begin{equation}\label{nu_t}
\nu_{t}(dx)= \frac{1}{t(t+\tau)}(t^2+\theta t x + \tau x^2)\pi_{t,\eta,\theta,\sigma,\tau}(dx)\,.
\end{equation}

We read out this answer from  \cite[Eqtn. (3.4)]{Bryc-Matysiak-Wesolowski-05} using the following elementary relation between the Cauchy-Stieltjes transforms:

If $\nu(dx)=(a x^2+bx + c )\pi(dx)$ and $m=\int x\pi(dx)$ then the Cauchy-Stieltjes transforms of $\pi$ and $\nu$ are related
by the  formula
\begin{equation}
  \label{Jacka-ulubiony}
  G_\nu(z)=(az^2+bz+c) G_\pi(z)-am -  az- b.
\end{equation}
In our setting, $m=0$, $a=\frac{\tau}{t(t+\tau)}$, $b=\theta/(t+\tau)$, $c=t/(t+\tau)$, and \cite[Eqtn. (3.4)]{Bryc-Matysiak-Wesolowski-05} gives
\begin{multline}\label{duze-G}
G_{\pi}(z)=\frac{\tau z+\theta t}{\tau z^2+\theta t z+t^2}+\frac{t\left[(1+\sigma\tau+2\sigma t)z+t\eta-\theta\right]}{2(\sigma z^2+\eta z+1)(\tau z^2+\theta t z+t^2)}\\
-\frac{t\sqrt{\left[(1-\sigma\tau) z-(\alpha+\sigma\beta)t-\beta-\alpha\tau\right]^2-4(1+\sigma t)(t+\tau)(1+\alpha\beta)}}{2(\sigma z^2+\eta z+1)(\tau z^2+\theta t z+t^2)}.
\end{multline}
Inserting this expression into the right hand side of \eqref{Jacka-ulubiony} we get  the right hand side of \eqref{duze-G0}.
Uniqueness of Cauchy-Stieltjes transform implies  \eqref{nu_t}.

\end{remark}

\section{Some other cases}

Several other cases can be worked out by a similar technique based on \eqref{H}. Recasting \cite[Section IV]{Feinsilver:1978a} in our notation, for $m=0,1,\dots$ we have
\begin{equation}\label{r1r}
\ddd_1^{m+1}\fff-\fff \ddd_1^{m+1}=(m+1)\ddd_1^m,
\end{equation} and if
$$\hhh=\sum_{k=1}^{\infty}\,\tfrac{c_k}{k!}\,\ddd_1^k$$
then
\begin{equation}\label{AS}
\ggen=\sum_{m=1}^{\infty}\,\tfrac{c_m}{(m+1)!}\,\ddd_1^{m+1}.
\end{equation}

The generators for a L\'evy processes $(\xi_t)$  with exponential moments act on polynomials in variable $x$ via $\kappa(\frac{\partial}{\partial x})$, where   $t \kappa(\theta)=\log \E(\exp(\theta \xi_t))$; this well-known formula appears e.g. \cite[Section 3]{Anshelevich:2011}.
\subsection{Centered Poisson process}
For example, the quadratic harness with $\gamma=1$ and $\eta=\sigma=\tau=0$ which corresponds to the Poisson process can be also analyzed by the algebraic technique. Equation \eqref{GlownyWzorek}
  assumes the form
\begin{equation}\label{com2}
\hhh\fff-\fff\hhh=\eee+\theta \hhh
\end{equation}

\arxiv{
We search a solution  $\hhh$ of the form
$$
\hhh=\sum_{k=1}^{\infty}\,\tfrac{c_k}{k!}\ddd_1^k=\varphi(\ddd_1).
$$
Due to \eqref{r1r}  equation \eqref{com2} implies
$$
\varphi'(d)=1+\theta\varphi(d)
$$
and the solution with $\varphi(0)=0$ is $\varphi(d)=\tfrac{1}{\theta}\left(e^{\theta d}-1\right)$.


}
We get
$$
\hhh= \tfrac{1}{\theta}\left(e^{\theta\ddd_1}-\eee\right).
$$

It follows from \eqref{AS} that
\arxiv{
$$
\ggen=\sum_{m=1}^{\infty}\,\tfrac{\theta^{m-1}}{(m+1)!}\,\ddd_1^{m+1}=\tfrac{1}{\theta^2}\sum_{m=2}^{\infty}\,\tfrac{\theta^m}{m!}\,\ddd_1^m.
$$
Consequently,
}
$$
\ggen=\tfrac{1}{\theta^2}\left(e^{\theta\ddd_1}-\eee-\theta\ddd_1\right).
$$
\subsection{A non-L\'evy example}

Here we consider a quadratic harness with parameters $\gamma=1$, $\tau=\sigma=0$. This quadratic harness appeared under the name
{\em quantum Bessel process} in \cite{biane1996quelques} (see also \cite{matysiak2015zonal} for a multidimensional version), and as
{\em classical bi-Poisson} process in \cite{Bryc-Wesolowski-05}.

Then  equation \eqref{GlownyWzorek}  assumes the form
\begin{equation}\label{com4}
\hhh\fff-\fff\hhh=\eee+\theta\hhh+\eta(\fff-t\hhh)=\eee+\eta\fff+(\theta-t\eta)\hhh.
\end{equation}
For $\hhh=(\eee+\eta\fff)\tilde{\hhh}$ we obtain
$$
(\eee+\eta\fff)(\tilde{\hhh}\fff-\fff\tilde{\hhh})=(\eee+\eta\fff)(\eee+(\theta-t\eta)\tilde{\hhh}).
$$
Comparing this with \eqref{com2} we conclude that by uniqueness the solution of \eqref{com4} is
$$
\hhh=\tfrac{1}{\theta-t\eta}(\eee+\eta\fff)\left(e^{(\theta-t\eta)\ddd_1}-\eee\right).
$$

From \eqref{H2G-sol} it follows that
$$
\ggen=:\ggen(\hhh)=(\eee+\eta\fff)\,\ggen(\tilde{\hhh}).
$$
Therefore   we obtain
$$
\ggen=\tfrac{1}{(\theta-t\eta)^2}\,(\eee+\eta\fff)\left(e^{(\theta-t\eta)\ddd_1}-\eee-(\theta-t\eta)\ddd_1\right).
$$

\section*{Acknowledgement} %
 JW research was supported in part by NCN grant 2012/05/B/ST1/00554. WB research was supported in part by the Taft Research Center at the University of Cincinnati.

 \def\cprime{$'$}

\end{document}